\newcommand\norma[1]{\left\lVert#1\right\rVert}
\newcommand\abs[1]{\left\lvert#1\right\rvert}
\newcommand{\R}{\mathbb{R}}
\newcommand{\C}{\mathbb{C}}
\renewcommand{\H}{\mathbb{H}}
\renewcommand{\epsilon}{\varepsilon}
\renewcommand{\theta}{\vartheta}
\renewcommand{\phi}{\varphi}
\renewcommand{\Re}{\mathrm{Re}}
\renewcommand{\Im}{\mathrm{Im}}
\theoremstyle{plain}
\newtheorem{teor}{Theorem}
\newtheorem{prop}[teor]{Proposition}
\newtheorem{lem}[teor]{Lemma}
\theoremstyle{remark}
\theoremstyle{definition}
\title[]{A remark on the quaternionic Monge-Amp\`ere equation on foliated manifolds}
\begin{document}
 
\thanks{This work was supported by GNSAGA of INdAM}
\address{Dipartimento di Matematica G. Peano \\ Universit\`a di Torino\\
Via Carlo Alberto 10\\
10123 Torino\\ Italy}
\email{giovanni.gentili@unito.it} \email{luigi.vezzoni@unito.it}

\author{Giovanni Gentili and Luigi Vezzoni}
\date{\today}
\subjclass[2020]{Primary 53C26, 32W20; Secondary 53C12}

\maketitle
\begin{abstract}
Pursuing the approach in \cite{GentiliVezzoni} we study the quaternionic Monge-Amp\`ere equation on HKT manifolds admitting an HKT foliation having corank $4$. We show that in this setting the quaternionic Monge-Amp\`ere equation has always a unique solution for every basic datum.  This approach includes the study of the equation on ${\rm SU}(3)$. 

\end{abstract}
\section{Introduction}
A hypercomplex manifold is a $4n$-dimensional smooth manifold $ M $ equipped with a triple of complex structures $(I,J,K)$ satisfying the quaternionic relation 
$$
IJ=-JI=K\,.
$$   
A Riemannian metric $g$ on a hypercomplex manifold  is called {\em hyperhermitian} if it is compatible with each $I,J,K$. A  hyperhermitian metric $g$ induces the {\em HKT form} 
$$
\Omega=\omega_{J}+i\omega_{K}\,,
$$
where $\omega_{J}$ and $\omega_K$ are the fundamental forms of $(g,J)$ and $(g,K)$, respectively. The form $\Omega$ belongs to $\Lambda^{2,0}_I$, is related to the metric $g$ by the formula 
\begin{equation}\label{g}
\Omega(Z,W)=2g(JZ, W) \,,\quad \mbox{ for every }Z,W\in \Gamma(T^{1,0}_IM)
\end{equation}
and satisfies 
\begin{eqnarray}
&& J\Omega=\bar	\Omega\,,\label{1}\\
&& \Omega(Z,J\bar Z)> 0\,,\quad \mbox{ for every }Z\in \Gamma(T^{1,0}_IM)\,. \label{2}
\end{eqnarray}
Viceversa every $\Omega \in \Lambda^{2,0}_{I}$ satisfying \eqref{1} and \eqref{2} determines a metric $g$ via \eqref{g}.

\medskip 
A hyperhermitian manifold $(M,I,J,K,g)$ is called HKT if its HKT form $\Omega$ satisfies
$$
\partial \Omega=0\,,
$$
where $\partial$ is with respect to $I$. HKT structures have been studied intensively in the last years for mathematical and physical reasons (see e.g. \cite{Alesker-Verbitsky (2006),Banos,GF,GLV,Grantcharov-Poon (2000),Howe-Papadopoulos (2000),Ivanov,Swann,Verbitsky (2002),Verbitsky (2007),Verbitsky (2009)} and the references therein).

\medskip 
This note focuses on the following conjecture introduced by Alesker and Verbisky in \cite{Alesker-Verbitsky (2010)}: 

\medskip 
\noindent {\bf Conjecture [Alesker, Verbitsky].} {\em Let $(M,I,J,K,g)$ be a compact {\rm HKT} manifold. For every $F\in C^{\infty}(M)$ there exists a unique $(\varphi,b)\in C^{\infty}(M)\times \R_+$ such that 
\begin{equation}
\label{qMAe}
(\Omega+\partial\partial_J\varphi)^n=b\,{\rm e}^F\Omega^n\,,\quad \Omega+\partial\partial_J\varphi>0\,,\quad \int_M\varphi {\rm Vol}_g= 0\,. 
\end{equation}
} 
Equation \eqref{qMAe} is called the {\em quaternionic Monge-Amp\`ere} equation and involves the operator $\partial_J=J^{-1}\bar\partial J$. Note that the form  $\Omega_\varphi:=\Omega+\partial\partial_J\varphi$ belongs to $\Lambda^{2,0}_I$ for every $\varphi \in C^{\infty}(M)$ and the condition $\Omega_\varphi>0$ means that $\Omega_\varphi$  satisfies \eqref{2}. Since $\Omega_\varphi$ satisfies \eqref{1} for every $\varphi \in C^{\infty}(M)$, the condition $\Omega_\varphi>0$ implies that $\Omega_\varphi$ determines a new HKT metric $g_\varphi$. Hence the conjecture of Alesker and Verbitsky is the natural counterpart of the classical Calabi conjecture in the realm of HKT geometry. Even if the conjecture is still open there are some partial results suggesting that it should be true: solutions to the equation are unique \cite{Alesker-Verbitsky (2010)}; solutions to the equation satisfy a $C^0$ a priori estimate \cite{Alesker-Shelukhin (2017),Alesker-Verbitsky (2010),Sroka}; the conjecture is true under some extra assumptions  \cite{Alesker (2013),BGV,DS,GentiliVezzoni}. 

\medskip 
The research of the present paper moves from our previous paper \cite{GentiliVezzoni} where we studied the Alesker-Verbitsky conjecture on some principal torus bundles over a torus. When the bundles considered in \cite{GentiliVezzoni} are regarded as $T^4$-bundles over a $T^4$, then the equation reduces to the classical Poisson equation on the base \cite[Remark in Section 2]{GentiliVezzoni}. Here we generalize the construction to foliated HKT manifolds, where the foliation replaces the role of the fiber. More precisely we consider the following setting:  

\medskip  
We say that a foliation $\mathcal F$ on an HKT manifold $(M,I,J,K,g)$ is an {\em HKT foliation} if 
$$
\mbox{$T_x\mathcal F$ is $(I_x,J_x,K_x)$-invariant for every $x$ in $M$},
$$
where $T\mathcal F$ denotes the vector bundle induced by $\mathcal F$. A function $f$ is called {\em basic} with respect to a foliation $\mathcal F$ if $X(f)=0$ for every $X\in\Gamma(\mathcal F)$, where $\Gamma(\mathcal F) $ is the space of smooth sections of $T\mathcal F$. We denote by $C^{k}_B(M)$ the space of real $C^k$ basic functions on $(M,\mathcal F)$. Our main result is the following:

\begin{teor}\label{main}
Let $(M,I,J,K,g)$ be a compact {\rm HKT} manifold and let $\mathcal F$ be an HKT foliation of real corank $4$ on $M$.  Then the quaternionic Monge-Amp\`ere equation \eqref{qMAe} has a unique solution for every basic datum $F\in C^{\infty}_B(M)$. Moreover the solution is necessarily basic.    
\end{teor}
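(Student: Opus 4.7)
The plan is to exploit the corank-$4$ assumption to collapse \eqref{qMAe} to a \emph{linear} transversely elliptic equation on basic functions, in the spirit of the $T^4$-bundle case in \cite{GentiliVezzoni}. Uniqueness is standard (an $L^2$-pairing or maximum-principle argument as in \cite{Alesker-Verbitsky (2010)}), so I focus on existence and look for a solution $\varphi\in C^\infty_B(M)$, which is natural since the datum $F$ is basic.

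\medskip

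The key observation is an algebraic vanishing. Since $T\mathcal F$ is invariant under $I$, $J$, $K$, the operators $\partial$, $\bar\partial$, and hence $\partial_J=J^{-1}\bar\partial J$, preserve basic forms, so $\partial\partial_J\varphi$ is a basic $(2,0)_I$-form. Because $\mathcal F$ has real corank $4$ and $I$ preserves the normal bundle, the latter is a complex bundle of $I$-dimension $2$, and the bundle of basic $(p,0)_I$-forms has complex rank $\binom{2}{p}$: rank $1$ for $p=2$ and rank $0$ for $p=4$. Consequently
\[
(\partial\partial_J\varphi)\wedge(\partial\partial_J\varphi)=0,
\]
the binomial expansion of $(\Omega+\partial\partial_J\varphi)^n$ truncates, and \eqref{qMAe} reduces to the linear equation
\[
n\,\Omega^{n-1}\wedge\partial\partial_J\varphi=(b\,{\rm e}^F-1)\,\Omega^n.
\]

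\medskip

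Since $\Omega^n$ trivializes the line bundle $\Lambda^{2n,0}_I$, the above rewrites as $\mathcal L\varphi=b\,{\rm e}^F-1$ for a second-order linear operator $\mathcal L\colon C^\infty_B(M)\to C^\infty_B(M)$. Computing in a foliation chart identifies the principal symbol of $\mathcal L$, up to a positive factor, with the scalar Laplacian of the transverse hyperhermitian $4$-manifold --- in quaternionic dimension one the operator $\partial\partial_J$ on functions is a positive multiple of the flat quaternionic Laplacian on $\H$. Hence $\mathcal L$ is transversely elliptic for the Riemannian foliation $\mathcal F$, and the basic Hodge theory of El Kacimi-Alaoui yields a Fredholm alternative on $C^\infty_B(M)$: the formal adjoint $\mathcal L^*$ has a one-dimensional kernel spanned by a strictly positive basic density, pairing against which fixes $b>0$ uniquely, after which the linear equation admits a unique basic $\varphi$ with $\int_M \varphi\,{\rm Vol}_g=0$.

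\medskip

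Positivity of $\Omega_\varphi$ is then automatic: $\partial\partial_J\varphi$ annihilates $T\mathcal F$, so the leaf-wise restriction of $\Omega_\varphi$ equals that of $\Omega$ and is positive; transversely $\partial\partial_J\varphi$ is a scalar multiple of $\Omega$, and the linear equation forces $\Omega_\varphi=b\,{\rm e}^F\,\Omega>0$. The most delicate step I anticipate is the rigorous transverse-ellipticity and basic-Hodge argument for $\mathcal L$: specifically, verifying that the HKT data provide the transverse Riemannian structure required to invoke El Kacimi-Alaoui, and isolating the positive generator of $\ker\mathcal L^*$ needed to fix $b>0$. Once this is in place, the non-linear geometric analysis characteristic of the quaternionic Calabi problem has been entirely bypassed by the corank-$4$ hypothesis.
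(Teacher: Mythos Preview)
Your reduction to a linear equation rests on the claim that $\partial\partial_J\varphi$ is a \emph{basic} $(2,0)_I$-form whenever $\varphi$ is basic, so that $(\partial\partial_J\varphi)^2=0$ by a rank count. This claim is false. Invariance of $T\mathcal F$ under $I,J,K$ does not make $\partial$, $\bar\partial$, or $\partial_J$ preserve basic forms; only $d$ does that automatically. Concretely, $\partial_J\varphi$ is horizontal (it is a combination of the transverse coframe $Z^1,Z^2$), but applying $\partial$ reintroduces leafwise directions because $\partial Z^1,\partial Z^2$ acquire mixed components governed by the brackets $[Z_k,Z_{\bar 1}]$, $[Z_k,Z_{\bar 2}]$ with $k\ge 3$. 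The computation in Lemma~\ref{lemma3} gives, for basic $\varphi$,
\[
\partial\partial_J\varphi=(\cdots)\,Z^{12}+\sum_{k\ge 3}P_k(\nabla\varphi)\,Z^{1k}+\sum_{k\ge 3}Q_k(\nabla\varphi)\,Z^{2k},
\]
and the wedge of the $Z^{1k}$ and $Z^{2k}$ terms produces a nonzero $Z^{12(2j-1)(2j)}$ contribution. The ${\rm SU}(3)$ example at the end of the paper exhibits this explicitly:
\[
\frac{(\Omega+\partial\partial_J\varphi)^2}{\Omega^2}=1+Z_3Z_{\bar 3}(\varphi)+Z_4Z_{\bar 4}(\varphi)-4|Z_3(\varphi)|^2-4|Z_4(\varphi)|^2,
\]
with a genuine quadratic gradient term. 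Your positivity argument likewise fails, since $\partial\partial_J\varphi$ does not annihilate $T\mathcal F$.

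Thus the equation on basic functions is \emph{semilinear}, $\Delta\varphi+Q(\nabla\varphi,\nabla\varphi)+1=b\,{\rm e}^F$ with $Q\le 0$, and transverse linear Hodge theory is not enough. The paper instead proves a priori estimates for this semilinear PDE on the whole of $M$: a $C^0$ bound via the Alexandrov--Bakelman--Pucci estimate together with the weak Harnack inequality, then $C^1$ and $C^{1,\alpha}$ bounds from the Ladyzhenskaya--Ural'tseva interior estimates for equations with quadratic growth in the gradient, followed by Schauder and bootstrapping. Existence then comes from the continuity method, and a short maximum-principle argument shows the resulting $\varphi$ is basic.
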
  
  
\medskip 
\noindent {\bf Acknowledgements.} The authors are very grateful to
 Giulio Ciraolo and Luciano Mari for many useful conversations.   
  
\section{Proof of Theorem \ref{main}}
\label{section}
It is well-known that solutions to the quaternionic Monge-Amp\`ere equation \eqref{qMAe} on a compact HKT manifold are in general unique. This can, for instance, be observed as follows: \\
let $(\phi_1,b_1),(\phi_2,b_2)$ be two solutions to \eqref{qMAe} with $ b_1\geq b_2 $. Setting $ \Omega_i=\Omega+\partial \partial_J \phi_i $ we have that
\[
\partial \partial_J(\phi_1-\phi_2) \wedge \sum_{k=0}^{n-1} \Omega_1^k\wedge \Omega_2^{n-1-k}= \Omega_1^n-\Omega_2^n=(b_1-b_2)\mathrm{e}^F \Omega^n\geq 0\,.
\]
On the left hand-side we have a second order linear elliptic operator without free term applied to $ \phi_1-\phi_2 $ and from the maximum principle and the fact that $ \phi_1,\phi_2 $ have zero mean it follows $\phi_1=\phi_2 $. Hence we have also $ b_1=b_2 $ and the uniqueness follows. 

\medskip 
Now we consider the framework of Theorem \ref{main}: let  $(M,I,J,K,g)$ be a compact {\rm HKT} manifold equipped with an HKT foliation $\mathcal F$ of real corank $4$ and consider the quaternionic Monge-Amp\`ere equation
\begin{equation}
\label{qMAeb}
(\Omega+\partial\partial_J\varphi)^n=b\,{\rm e}^F\Omega^n\,,\quad \Omega+\partial\partial_J\varphi>0\,,\quad \int_M\varphi {\rm Vol}_g= 0\,,\quad F\in C^{\infty}_B(M).
\end{equation}
We have the following 

\begin{lem}\label{lemma3}
Let $\varphi \in C^{2}_B(M)$. Then
$$
\frac{(\Omega+\partial \partial_J\varphi)^n}{\Omega^n}=
\Delta\varphi
%+L(\nabla \varphi)
+Q(\nabla \varphi,\nabla \varphi)+1\,,
$$
where $\Delta$ is the Riemannian Laplacian of $g$
%, $L\in \Gamma(T^*M)$
and $Q\in \Gamma(T^*M\otimes T^*M)$ is negative semi-definite. 
\end{lem}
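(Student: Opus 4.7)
I would prove the lemma by a pointwise local computation exploiting the corank $4$ hypothesis: the transverse bundle $\nu\mathcal F := (T\mathcal F)^\perp$ has $I$-complex rank $2$, so $\Lambda^{2,0}_I\nu\mathcal F^*$ is one-dimensional. Since $g$ is hyperhermitian and $T\mathcal F$ is $(I,J,K)$-invariant, the splitting $TM = T\mathcal F\oplus T\mathcal F^\perp$ is $g$-orthogonal and $(I,J,K)$-invariant; using $\Omega(Z,W) = 2g(JZ,W)$ one checks that $\Omega$ has no ``mixed'' (one leaf leg, one transverse leg) component, so $\Omega = \Omega_T + \Omega_L$ with $\Omega_T$ purely transverse and $\Omega_L$ purely leafwise. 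For dimensional reasons $\Omega_T^2 = 0$ and $\Omega_L^n = 0$, whence $\Omega^n = n\,\Omega_T\wedge\Omega_L^{n-1}$.

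For $\varphi \in C^2_B(M)$, the form $\bar\partial\varphi$ has only transverse antiholomorphic legs, and since $J^{-1}$ preserves the transverse bundle, $\partial_J\varphi = J^{-1}\bar\partial\varphi$ is a transverse $(1,0)$-form. Applying $\partial$ once more and using basicness of the first derivatives of $\varphi$, one obtains
\[
\partial\partial_J\varphi = \Theta_T + \Theta_M,
\]
where $\Theta_T$ is purely transverse and carries the transverse Hessian of $\varphi$ (second order), while $\Theta_M$ has exactly one transverse and one leafwise leg and is linear in $\nabla\varphi$, its coefficients being the leaf-derivatives of the coefficients of $J$; the pure leaf part vanishes. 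Since $\Omega_T + \Theta_T$ still lies in the one-dimensional transverse $\Lambda^{2,0}_I$, its square is zero, and matching transverse/leaf bidegrees with the top degree $(2n, 0)$ forces only two terms to survive the multinomial expansion:
\[
(\Omega + \partial\partial_J\varphi)^n = n(\Omega_T + \Theta_T)\wedge\Omega_L^{n-1} + \binom{n}{2}\,\Theta_M^2\wedge\Omega_L^{n-2}.
\]
Dividing by $\Omega^n$ yields three contributions: the constant $1$, the linear second-order operator $\Theta_T/\Omega_T$ in $\varphi$, and a pointwise quadratic form in $\nabla\varphi$ coming from $\Theta_M^2$.

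A direct computation in an orthonormal frame, using $\Omega(Z,W) = 2g(JZ,W)$, identifies $\Theta_T/\Omega_T$ with $\Delta\varphi$ modulo a first-order remainder arising from the second fundamental form of the leaves. Collecting this remainder with the quadratic $\Theta_M^2$ contribution produces the symmetric form $Q(\nabla\varphi,\nabla\varphi)$. The main obstacle is verifying that $Q$ is negative semi-definite: reordering wedges in $\Theta_M\wedge\Theta_M$ produces a minus sign in front of $\Omega_T\wedge\Omega_L^{n-1}$, so the $\Theta_M^2$-term is manifestly nonpositive; one then has to check that the first-order residue from $\Theta_T/\Omega_T - \Delta\varphi$ combines with it compatibly. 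This is done pointwise in an orthonormal frame adapted to the foliation, using the reality constraint $J\Omega_\varphi = \overline{\Omega_\varphi}$ imposed by the HKT condition, so that the final expression can be recognized as a sum of terms of the form $-\|\cdot\|^2$.
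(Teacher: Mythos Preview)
Your overall plan---decompose $\partial\partial_J\varphi$ into a transverse piece $\Theta_T$ and a mixed piece $\Theta_M$, then read off the contributions to $(\Omega+\partial\partial_J\varphi)^n/\Omega^n$---is exactly the structure of the paper's argument. The gap is that you never invoke the integrability of $J$, which is the engine behind both nontrivial claims. You say $\Theta_T/\Omega_T$ equals $\Delta\varphi$ up to a first-order remainder which you then ``collect'' into $Q(\nabla\varphi,\nabla\varphi)$; but $Q$ is a bilinear form, so $Q(\nabla\varphi,\nabla\varphi)$ is homogeneous of degree two and cannot absorb a genuinely linear term. In fact that linear term must vanish, and the paper shows this by computing, in a unitary frame with $J(Z_{2k-1})=Z_{\overline{2k}}$, that the transverse coefficient of $\partial\partial_J\varphi$ is $Z_1Z_{\bar 1}(\varphi)+Z_2Z_{\bar 2}(\varphi)+B^1_{12}Z_{\bar 2}(\varphi)-B^2_{12}Z_{\bar 1}(\varphi)$; the vanishing of the Nijenhuis tensor of $J$ then forces $B^1_{12}=B^2_{12}=0$ and $B^k_{1\bar 1}+B^k_{2\bar 2}=0$ for $k=1,2$, so this coefficient equals $\Delta\varphi$ on the nose. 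The reality condition $J\Omega_\varphi=\overline{\Omega_\varphi}$ you cite holds automatically for any real $\varphi$ and does not yield these relations.

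The same omission undermines negative semi-definiteness. Your ``reordering wedges'' argument does not produce a sign, since $\Theta_M$ has even degree. What actually happens is that the mixed contribution to the ratio is $\sum_j\bigl(P_{2j}Q_{2j-1}-P_{2j-1}Q_{2j}\bigr)$ with $P_k,Q_k$ linear in $\nabla\varphi$, and again the Nijenhuis relations for $J$ give $Q_{2j-1}=-\overline{P_{2j}}$ and $Q_{2j}=\overline{P_{2j-1}}$, so the sum becomes $-\sum_k|P_k|^2$. Without integrability of $J$ these identities are unavailable and the sign is not evident. (A minor further correction: the vanishing of the pure leaf component of $\partial\partial_J\varphi$ follows from involutivity of $T\mathcal F$, not from ``basicness of the first derivatives of $\varphi$''---those derivatives are not basic, since $Z_kZ_{\bar 1}(\varphi)=[Z_k,Z_{\bar 1}](\varphi)$ for $k\ge 3$ need not vanish.)
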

\begin{proof}
Since $\mathcal F_x$ is $I_x$-invariant for every $x\in M$, then $T\mathcal F\otimes \C$ splits 
as $T\mathcal F\otimes \C=T^{1,0}\mathcal F\otimes T^{0,1}\mathcal F$. 
Let $\{Z_1,\dots,Z_{2n}\}$ be a local $g$-unitary frame with respect to $I$ such that 
$$
\langle Z_3,\dots,Z_{2n}\rangle=\Gamma(T^{1,0}\mathcal F)\,.
$$
Let us denote the conjugate $ \bar{Z}_r $ by $ Z_{\bar r} $ for every $ r=1,\dots,2n $ and suppose
$$
J(Z_{2k-1})=Z_{\overline{2k}}\,,\quad \text{for every $ k=1,\dots,n $}. 
$$
These assumptions imply that the HKT form of $g$ takes its standard expression  
\[
\Omega=Z^{12}+Z^{34}+\dots +Z^{(2n-1)(2n)}
\]
where $\{Z^1,\dots,Z^{2n}\}$ is the dual coframe to $\{Z_1,\dots,Z_{2n}\}$ and by $Z^{ij}$ we mean $Z^i\wedge Z^j$.  

We can write 
\begin{align*}
[Z_r,Z_s]=\sum_{k=1}^{2n} B^k_{rs}Z_k\,, && [Z_r,\bar{Z}_s]=\sum_{k=1}^{2n} \left(B^k_{r\bar{s}}Z_k+B^{\bar{k}}_{r\bar{s}}{Z}_{\bar k}\right)\,,
\end{align*}
for some functions $\{B_{rs}^k,B^k_{r\bar{s}},B^{\bar k}_{r\bar{s}}\}$.

For a basic function $\varphi$ we have 
\[
\partial_J\phi=-J \bar \partial \varphi=-J\left({Z}_{\bar 1}(\varphi) Z^{\bar 1}+{Z}_{\bar 2}(\varphi) Z^{\bar 2}\right)={Z}_{\bar 1}(\phi) Z^2-Z_{\bar 2}(\phi) Z^1;
\]
and
\[
\begin{aligned}
\partial \partial_J \varphi=&\sum_{k=1}^{2n}(Z_kZ_{\bar 1}(\varphi) Z^{k2}-Z_kZ_{\bar 2}(\varphi) Z^{k1})
+\sum_{r<s}(-Z_{\bar 1}(\phi) B^{2}_{rs}Z^{rs}
+Z_{\bar 2}(\phi)B^{1}_{rs} Z^{rs})\\
=&\sum_{k=1}^{2n}(Z_kZ_{\bar 1}(\varphi) Z^{k2}-Z_kZ_{\bar 2}(\varphi) Z^{k1})+
\sum_{r<s}(Z_{\bar 2}(\phi)B^{1}_{rs}-Z_{\bar 1}(\phi) B^{2}_{rs}) Z^{rs}\\
=&(Z_1Z_{\bar 1}(\phi)+Z_2Z_{\bar 2}(\phi)) Z^{12}+\sum_{k=3}^{2n}(Z_kZ_{\bar 1}(\phi) Z^{k2}
-Z_kZ_{\bar 2}(\phi) Z^{k1})+\sum_{r<s}(Z_{\bar 2}(\phi)B^{1}_{rs}-Z_{\bar 1}(\phi) B^{2}_{rs}) Z^{rs}\,,
\end{aligned}
\]
Since $ \mathcal{F} $ is a foliation, $ B^1_{rs}=0=B^2_{rs} $ for $ 2<r<s $, thus
\[
\begin{aligned}
\partial \partial_J \varphi=&(Z_1Z_{\bar 1}(\phi)+Z_2Z_{\bar 2}(\phi)) Z^{12}+
\sum_{k=3}^{2n}\sum_{l=1}^{2n}(B_{k\bar{1}}^lZ_l(\varphi) Z^{k2}+
B_{k\bar{1}}^{\bar l}Z_{\bar l}(\varphi) Z^{k2}-B_{k\bar{2}}^{l}Z_l(\varphi) Z^{k1}
-B_{k\bar{2}}^{\bar l}Z_{\bar l}(\varphi) Z^{k1})\\
&+(Z_{\bar 2}(\phi)B^{1}_{12}-Z_{\bar 1}(\phi) B^{2}_{12})Z^{12}+\sum_{s=3}^{2n}(Z_{\bar 2}(\phi)B^{1}_{1s}-Z_{\bar 1}(\phi) B^{2}_{1s}) Z^{1s}+\sum_{s=3}^{2n}(Z_{\bar 2}(\phi)B^{1}_{2s}-Z_{\bar 1}(\phi) B^{2}_{2s}) Z^{2s}\\
=&(Z_1Z_{\bar 1}(\phi)+Z_2Z_{\bar 2}(\phi)+B^{1}_{12}Z_{\bar 2}(\phi)-B^{2}_{12}Z_{\bar 1}(\phi) ) Z^{12}\\
&+\sum_{k=3}^{2n}
(B^{1}_{1k}Z_{\bar 2}(\phi)-B^{2}_{1k}Z_{\bar 1}(\phi) +B_{k\bar{2}}^{1}Z_1(\varphi) +B_{k\bar{2}}^{\bar 1}Z_{\bar 1}(\phi)+B_{k\bar{2}}^{2}Z_2(\varphi) +B_{k\bar{2}}^{\bar 2}Z_{\bar 2}(\phi)) Z^{1k}\\
&+\sum_{k=3}^{2n} (B^{1}_{2k}Z_{\bar 2}(\phi)-B^{2}_{2k}Z_{\bar 1}(\phi) -B_{k\bar{1}}^1Z_1(\varphi) -
B_{k\bar{1}}^{\bar 1}Z_{\bar 1}(\phi)-B_{k\bar{1}}^2Z_2(\varphi) -
B_{k\bar{1}}^{\bar 2}Z_{\bar 2}(\phi)) Z^{2k}\,.
\end{aligned}
\]
By setting
\[
\begin{aligned}
P_k(\nabla \phi)&=B^{1}_{1k}Z_{\bar 2}(\phi)-B^{2}_{1k}Z_{\bar 1}(\phi) +B_{k\bar{2}}^{1}Z_1(\varphi) +B_{k\bar{2}}^{\bar 1}Z_{\bar 1}(\phi)+B_{k\bar{2}}^{2}Z_2(\varphi) +B_{k\bar{2}}^{\bar 2}Z_{\bar 2}(\phi)\,,\\
Q_k(\nabla \phi)&=B^{1}_{2k}Z_{\bar 2}(\phi)-B^{2}_{2k}Z_{\bar 1}(\phi) -B_{k\bar{1}}^1Z_1(\varphi) -
B_{k\bar{1}}^{\bar 1}Z_{\bar 1}(\phi)-B_{k\bar{1}}^2Z_2(\varphi)-
B_{k\bar{1}}^{\bar 2}Z_{\bar 2}(\phi)\,,
\end{aligned}
\]
we obtain
\[
\begin{aligned}
\frac{(\Omega+\partial\partial_J\varphi)^n}{\Omega^n}=& 1+Z_1Z_{\bar 1}(\phi)+Z_2Z_{\bar 2}(\phi)+B^{1}_{12}Z_{\bar 2}(\phi)-B^{2}_{12}Z_{\bar 1}(\phi)\\
&+ \sum_{j=3}^n\left(P_{2j}(\nabla \phi)Q_{2j-1}(\nabla \phi)-P_{2j-1}(\nabla \phi)Q_{2j}(\nabla \phi)\right)\,.
\end{aligned}
\]
Since the Nijenhuis tensor of $ J $ vanishes
we have
\[
\begin{aligned}
0&=[Z_1,Z_{\bar 1}]+J[Z_1,JZ_{\bar 1}]+J[JZ_1,Z_{\bar 1}]-[JZ_1,JZ_{\bar 1}]\\
&=\sum_{k=1}^{2n}(B^k_{1\bar 1}Z_k+B_{1\bar 1}^{\bar k}Z_{\bar k}+B^k_{12}JZ_k+B^{\bar k}_{\bar 2 \bar 1}JZ_{\bar k}-B^k_{\bar 2 2}Z_k-B^{\bar k}_{\bar 2 2}Z_{\bar k})
\end{aligned}
\]
and thus
\[
\begin{cases}
B_{1\bar 1}^{2k-1}-B_{\bar 2 \bar 1}^{\overline{2k}}-B^{2k-1}_{\bar 2 2}=0\,,\\
B_{1\bar 1}^{2k}+B_{\bar 2 \bar 1}^{\overline{2k-1}}-B^{2k}_{\bar 2 2}=0\,,\\
\end{cases} \qquad k=1,\dots,n\,.
\]
Moreover, since $ B^{2k-1}_{1\bar 1},B^{2k}_{1\bar 1},B^{2k-1}_{2\bar 2},B^{2k}_{2\bar 2} $ are purely imaginary, for $ k=1 $ we deduce
\[
\begin{cases}
B_{\bar 2 \bar 1}^{\bar{2}}=B_{1\bar 1}^{1}+B^{1}_{2\bar 2}=-B_{\bar 11}^{\bar{1}}-B^{\bar{1}}_{\bar 22}=-B^{2}_{21}\,,\\
B_{\bar 2 \bar 1}^{\bar{1}}=-B_{1\bar 1}^{2}-B^{2}_{2\bar 2}=B_{\bar 11}^{\bar{2}}+B^{\bar{2}}_{\bar 22}=-B^{1}_{21}\,,
\end{cases}
\]
but then $ B^{2}_{21} $ and $ B^1_{21} $ are both real and purely imaginary, yielding
\[
\begin{cases}
B^{1}_{21}=0\,,\\
B^2_{21}=0\,,\\
B^1_{1\bar1}+B^1_{2\bar 2}=0\,,\\
B^2_{1\bar1}+B^2_{2\bar 2}=0\,.
\end{cases}
\]
Writing $ X_r=\Re(Z_r) $ and $ Y_r=\Im(Z_r) $ for $ r=1,2 $, we see that
\[
Z_rZ_{\bar r}(\phi)=(X_r+iY_r)(X_r-iY_r)(\phi)=X_rX_r(\phi)+i[Y_r,X_r](\phi)+Y_rY_r(\phi)
\]
and also
\[
0=\sum_{k=1}^{2n}(B^k_{1\bar 1}+B^k_{2\bar 2})Z_k(\phi)=([Z_1,Z_{\bar 1}]+[Z_2,Z_{\bar 2}])(\phi)=2i([Y_1,X_1]+[Y_2,X_2])(\phi)
\]
so that
\[
Z_1Z_{\bar 1}(\phi)+Z_2Z_{\bar 2}(\phi)=X_1X_1(\phi)+Y_1Y_1(\phi)+X_2X_2(\phi)+Y_2Y_2(\phi)=\Delta \phi\,.
\]
Furthermore, from the vanishing of the Nijenhuis tensor it easy to observe that
\[
Q_{2j-1}(\nabla \phi)=-\overline{P_{2j}(\nabla \phi)}\,,\qquad Q_{2j}(\nabla \phi)=\overline{P_{2j-1}(\nabla \phi)}\,.
\] 
Thus we finally obtain
$$
\frac{(\Omega+\partial\partial_J\varphi)^n}{\Omega^n}=1+\Delta \phi- \sum_{k=3}^{2n}|P_{k}(\nabla \phi)|^2\,.
$$
The claim then follows by setting 
%\[
%L(\nabla \phi)=B^{1}_{12}\bar{Z}_2(\phi)-B^{2}_{12}\varphi_{\bar 1}
%\]
%and
\[
Q(\nabla \phi,\nabla \phi)=- \sum_{k=3}^n|P_{k}(\nabla \phi)|^2\,. \qedhere
\]
\end{proof}
From the previous lemma it follows that under our assumptions for a basic function $F$ equation \eqref{qMAe} reduces to 
\begin{equation}\label{eqngen}
\Delta\varphi
%+L(\nabla \varphi)
+Q(\nabla \varphi,\nabla \varphi)+1=b\,{\rm e}^F\,,\quad \int_M\varphi {\rm Vol}_g = 0\,. 
\end{equation}
We then focus on this last equation in the general setting of a compact Riemannian manifold.

\medskip 
In order to prove existence of solutions to \eqref{eqngen} we need to show some a priori estimates. 
%We mention in passing that since we aim to study equation \eqref{eqngen} on a compact Riemannian manifold we cannot directly apply the known estimates for the quaternionic Monge-Amp\`{e}re equation (see e.g \cite{Alesker (2013),Alesker-Shelukhin (2017),DS,Sroka}) as they rely on the HKT condition. 
The $ C^0 $ bound is obtained  by using the Alexandrov-Bakelman-Pucci estimate as done by  B\l{}ocki in the case of the complex Monge-Amp\`{e}re equation \cite{Blocki}. The precise result we need is the following theorem  due to Sz\'{e}kelyhidi

\begin{teor}[Proposition 10 in \cite{Szekelyhidi}]
\label{teor_Szekelyhidi}
Let $ B_1(0)\subseteq \R^m $ denote the unit ball centered at the origin. Assume that $ u\in C^2(\R^m) $ satisfies $ u(0)+\epsilon \leq \min_{\partial B_1(0)} u(x). $ Then there exists a constant $ C_m $ depending only on $ m $ such that
\begin{equation*}
\epsilon^m\leq C_m\int_{\Gamma_\epsilon} \det(D^2u)\,,
\end{equation*}
where $ D^2u $ is the Hessian of $ u $ and
\[
\Gamma_\epsilon =\left\{ x\in B_1(0) \mid  u(y)\geq u(x)+\nabla u(x)\cdot (y-x),\, \forall y\in B_1(0),\, \abs{\nabla u(x)}<\frac{\epsilon}{2} \right\}.
\]
\end{teor}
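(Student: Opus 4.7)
The plan is to establish this as a standard Alexandrov--Bakelman--Pucci-type estimate, using the gradient map together with the classical area formula. The underlying idea is that the set $\Gamma_\epsilon$ is (essentially) the contact set of $u$ with its convex envelope restricted to the regime where the gradient is small, so on $\Gamma_\epsilon$ the Hessian $D^2u$ is nonnegative and $\det(D^2u)$ serves as the Jacobian of $\nabla u$. One then wants to show that $\nabla u$ carries $\Gamma_\epsilon$ onto a set that covers the ball $B_{\epsilon/2}(0) \subseteq \R^m$, so the measure of that ball (which is comparable to $\epsilon^m$) bounds $\int_{\Gamma_\epsilon}\det(D^2u)$.

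First, I would prove the covering statement: for every $p \in \R^m$ with $\abs{p}<\epsilon/2$, there exists $x \in \Gamma_\epsilon$ with $\nabla u(x)=p$. To do this, consider the auxiliary function $v(y) := u(y)-p\cdot y$ on $\overline{B_1(0)}$. For every $y \in \partial B_1(0)$,
\[
v(y)=u(y)-p\cdot y \geq u(0)+\epsilon-\abs{p}>u(0)+\tfrac{\epsilon}{2}>v(0),
\]
so $v$ attains its minimum at some interior point $x \in B_1(0)$. At that point $\nabla u(x)=p$ (hence $\abs{\nabla u(x)}<\epsilon/2$) and $v$ lies above its tangent plane there, which translates exactly into the inequality $u(y)\geq u(x)+\nabla u(x)\cdot(y-x)$ for all $y\in B_1(0)$. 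Thus $x\in\Gamma_\epsilon$ as required, and additionally $D^2u(x)\geq 0$ because it is the Hessian of $v$ at its minimum.

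Second, I would combine this with the area formula. Since $\nabla u:\Gamma_\epsilon\to\R^m$ is a $C^1$ map whose image contains $B_{\epsilon/2}(0)$, and since $D^2u\geq 0$ on $\Gamma_\epsilon$ makes $\det(D^2u)=\abs{\det(D^2u)}$ the absolute value of the Jacobian of $\nabla u$, the change-of-variables inequality gives
\[
\abs{B_{\epsilon/2}(0)}\leq \int_{\nabla u(\Gamma_\epsilon)}d p \leq \int_{\Gamma_\epsilon}\abs{\det(D^2u)}\,dx=\int_{\Gamma_\epsilon}\det(D^2u)\,dx.
\]
Since $\abs{B_{\epsilon/2}(0)}=\omega_m(\epsilon/2)^m$, taking $C_m=2^m/\omega_m$ yields the stated bound.

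The main obstacle is the covering step; everything else is formal once $\nabla u(\Gamma_\epsilon)\supseteq B_{\epsilon/2}(0)$ is established. The subtlety is that $u$ is not assumed convex globally, so one cannot talk about the convex envelope naively, and one must use precisely the interior minimum argument above to guarantee that each small slope $p$ is realized by $\nabla u$ at a point where $u$ actually lies above the corresponding affine function on the whole of $B_1(0)$, not merely locally. The hypothesis that $u(0)+\epsilon\leq \min_{\partial B_1(0)}u$ is essential here to force the minimizer of $v$ into the interior for every $\abs{p}<\epsilon/2$.
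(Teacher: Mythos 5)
Your proof is correct. The paper does not prove this statement; it cites it directly as Proposition~11 of Sz\'ekelyhidi's paper, so there is no in-text argument to compare against. Your argument---minimizing $v(y)=u(y)-p\cdot y$ over $\overline{B_1(0)}$ for each $|p|<\epsilon/2$, using the boundary hypothesis to force the minimizer into the interior, reading off $\nabla u(x)=p$, $D^2u(x)\geq 0$, and the supporting-plane inequality from the minimum condition, and then applying the area formula to conclude $|B_{\epsilon/2}(0)|\leq\int_{\Gamma_\epsilon}\det(D^2u)$---is exactly the standard ABP covering argument, and it is in fact the proof given in the cited reference. The key steps are all sound: the boundary estimate $v(y)>u(0)+\epsilon/2\geq v(0)$ for $y\in\partial B_1(0)$ does force an interior minimum; the condition $v(y)\geq v(x)$ unwinds precisely to the defining inequality of $\Gamma_\epsilon$ (no convexity needed, just the interior minimum property); and since $D^2u\geq 0$ on $\Gamma_\epsilon$, $\det(D^2u)$ is the nonnegative Jacobian of $\nabla u$, so the area formula gives the measure bound with $C_m=2^m/\omega_m$.
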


This theorem allows us to reduce the $ C^0 $ estimate to an $ L^p $ estimate by using the following 
\begin{teor}[Weak Harnack Inequality, Theorem 8.18 in \cite{GT}]
\label{teor_Harnack}
Let $ R>0 $ and fix an integer $ m>2 $. Assume $ u \in C^2(\R^m) $ is non-negative on $ B_R(0) $ and such that $ \Delta u(x)\leq f(x) $ for some $ f\in C^0(\R^m) $ and all $ x\in B_R(0) $. Consider $ 1\leq p <m/(m-2) $, and $ q>m $. Then there exists a positive constant $ C=C(m,R,p,q) $ such that
\begin{equation*}
r^{-m/p}\|u\|_{L^p(B_{2r}(0))}\leq C\left( \inf_{x\in B_r(0)} u(x)+r^{2-2m/q}\|f\|_{L^{q/2}(B_{R}(0))} \right),
\end{equation*}
for any $ 0<r<R/4 $.
\end{teor}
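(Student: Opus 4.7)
The plan is to prove this classical Weak Harnack estimate by Moser iteration, following the scheme in Gilbarg-Trudinger. First I would observe that the hypothesis $\Delta u\leq f$ means that $u$ is a weak supersolution: $\int \nabla u\cdot \nabla\phi \geq -\int f\phi$ for every non-negative test function $\phi$ compactly supported in $B_R(0)$. To absorb the forcing term in a scale-invariant way I would set
\[
k:=r^{2-2m/q}\|f\|_{L^{q/2}(B_R(0))},\qquad \bar u:=u+k,
\]
so that $\bar u\geq k>0$ (up to a harmless approximation when $f\equiv 0$) and $|f|/\bar u^{\,0}$ can be controlled by $k$ via Hölder with the exponent $q/2>m/2$ matched to the Sobolev conjugate.

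Next I would derive the basic Caccioppoli inequality by inserting the test function $\phi=\eta^2\bar u^{\beta}$ (with $\eta\in C^\infty_c(B_R(0))$ a standard cut-off and $\beta\neq -1,0$). This yields
\[
|\beta+1|\int \eta^2 \bar u^{\beta-1}|\nabla\bar u|^2 \;\leq\; C\!\int |\nabla\eta|^2 \bar u^{\beta+1} + C\!\int \eta^2 |f|\,\bar u^{\beta},
\]
and, combined with the Sobolev inequality on $\R^m$, it produces a reverse-Hölder step
\[
\|\bar u\|_{L^{\chi s}(B_{r'}(0))} \leq \bigl(C(r-r')^{-2}\bigr)^{1/|s|}\,\|\bar u\|_{L^{s}(B_{r}(0))},\qquad \chi=\tfrac{m}{m-2},
\]
valid for every $s=\beta+1\neq 0$, where the choice of $k$ is exactly what is needed to absorb the $f$-term. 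Iterating this estimate over shrinking radii and letting $s\to -\infty$ gives, for some small $p_0>0$,
\[
\sup_{B_r(0)} \bar u^{-1} \;\leq\; C\,\|\bar u^{-1}\|_{L^{p_0}(B_{2r}(0))},
\]
equivalently $\inf_{B_r(0)}\bar u \geq C \|\bar u^{-1}\|_{L^{p_0}(B_{2r}(0))}^{-1}$. Running the same iteration with positive $s$ starting from $s=p_0$ gives, for any $p<m/(m-2)$,
\[
\|\bar u\|_{L^{p}(B_{2r}(0))} \leq C\,\|\bar u\|_{L^{p_0}(B_{R/2}(0))}.
\]

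The crux, and the place I expect the main difficulty, is to bridge these two chains, i.e.\ to establish an estimate of the form
\[
\|\bar u\|_{L^{p_0}(B_{R/2}(0))}\,\|\bar u^{-1}\|_{L^{p_0}(B_{2r}(0))} \leq C,
\]
for some $p_0>0$ that is not reached by either Moser iteration directly. The standard route is to test the supersolution inequality with $\phi=\eta^2\bar u^{-1}$ (the forbidden exponent $\beta=-1$) and use Cauchy-Schwarz and the $L^{q/2}$ bound on $f$ together with the choice of $k$ to obtain
\[
\int_{B_R(0)} \eta^2 |\nabla \log \bar u|^2 \leq C\!\int_{B_R(0)}|\nabla\eta|^2,
\]
i.e.\ $\log\bar u$ has locally bounded mean oscillation. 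Applying the John-Nirenberg inequality on cubes contained in $B_{R/2}(0)$ yields $p_0>0$ and a constant $C$ with
\[
\Bigl(\fint_{B}\bar u^{\,p_0}\Bigr)\Bigl(\fint_{B}\bar u^{-p_0}\Bigr) \leq C,
\]
which is exactly the missing link. Combining the three ingredients and tracking the dependence on $r$ and $R$ (keeping $0<r<R/4$) delivers
\[
r^{-m/p}\|u\|_{L^{p}(B_{2r}(0))} \leq r^{-m/p}\|\bar u\|_{L^{p}(B_{2r}(0))} \leq C\,\inf_{B_r(0)} \bar u = C\bigl(\inf_{B_r(0)} u + k\bigr),
\]
which is the stated inequality with $C=C(m,R,p,q)$. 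The genuinely delicate step is the John-Nirenberg passage: the Moser iterations on $L^s$ for $s\neq 0$ are mechanical, while turning the logarithmic energy estimate into the two-sided $L^{p_0}$ inequality requires the BMO machinery.
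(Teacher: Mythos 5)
The paper does not prove this theorem: it is cited verbatim as Theorem 8.18 of Gilbarg--Trudinger and used as a black box in the $C^0$ estimate of Lemma \ref{C0}, so there is no internal proof to compare against. Your sketch correctly reproduces the standard Moser-iteration proof from that reference --- the Caccioppoli inequality from $\phi = \eta^2\bar u^{\beta}$, the Sobolev-driven reverse-H\"older step, separate iterations over the positive and negative exponent ranges, and the John--Nirenberg bridge obtained by testing with $\phi=\eta^2\bar u^{-1}$ to show $\log\bar u$ has bounded mean oscillation --- and you accurately single out the John--Nirenberg passage as the genuinely delicate step and the normalization $k=r^{2-2m/q}\|f\|_{L^{q/2}}$ as what renders the forcing term scale-invariant. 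The details left implicit (keeping the three chains of estimates on compatible concentric balls inside $B_R$, crossing the excluded exponent $s=1$ in the positive iteration via one last application of the reverse-H\"older step with $s=p/\chi<1$, and the $\varepsilon$-regularization of $k$ when $f\equiv 0$) are all routine and do not affect the soundness of the outline.
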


\begin{lem}
\label{C0}
Let $(M,g)$ be a compact Riemannian manifold, $Q\in \Gamma(T^*M\otimes T^*M)$ be negative semi-definite and $F\in C^{0}(M)$. If $ (\phi,b)\in C^2(M)\times \R_+ $ satisfies 
$$
\Delta\varphi +Q(\nabla \varphi,\nabla \varphi)+1=b\,{\rm e}^F\,,\quad \int_M\varphi {\rm Vol}_g = 0\,,
$$
then there exists a positive constant $C$ depending only on $M$, $g$, $ \|Q\|_{C^0} $ and $F$ such that
\[
\norma{\phi}_{C^0}\leq C\,, \qquad b\leq C\,.
\]
\end{lem}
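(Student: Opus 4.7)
The plan is to prove $b \leq C$, $\sup\phi \leq C$, and $-\inf\phi \leq C$ in that order, with the last being the main technical step for which Theorem~\ref{teor_Szekelyhidi} is essential.

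Integrating the equation and using $\int_M \Delta\phi\,{\rm Vol}_g = 0$ together with $Q \leq 0$ gives
\[
b \int_M {\rm e}^F\,{\rm Vol}_g \;=\; {\rm Vol}(M) + \int_M Q(\nabla\phi,\nabla\phi)\,{\rm Vol}_g \;\leq\; {\rm Vol}(M),
\]
so $b$ is bounded above. Since $Q \leq 0$, the equation also gives $\Delta\phi \geq b\,{\rm e}^F - 1 \geq -1$, hence $v := \sup_M\phi - \phi \geq 0$ satisfies $\Delta v \leq 1$. Covering $M$ by finitely many coordinate balls and iterating Theorem~\ref{teor_Harnack} along a Harnack chain ending at a maximum point of $\phi$ (where $v$ vanishes) yields $\|v\|_{L^p(M)} \leq C$, and H\"older's inequality combined with $\int_M v\,{\rm Vol}_g = {\rm Vol}(M)\sup\phi$ then gives $\sup\phi \leq C$.

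For the remaining lower bound, set $m := \dim M$, $L := -\inf_M\phi$, and let $x_0$ be a minimum point; we may assume $L$ is large. Choose a coordinate chart centered at $x_0$ whose domain contains a geodesic ball identified, after rescaling, with the unit ball $B_1(0) \subset \R^m$; the chart size can be chosen uniformly by compactness of $M$. Consider
\[
u(x) := \phi(x) + L + \epsilon|x|^2
\]
for a small fixed $\epsilon > 0$. Then $u(0) = 0$ and $\min_{\partial B_1(0)} u \geq \epsilon$, so Theorem~\ref{teor_Szekelyhidi} applies. On the contact set $\Gamma_\epsilon$, $|\nabla u| < \epsilon/2$ forces $|\nabla\phi|_g \leq C\epsilon$, whence $|Q(\nabla\phi,\nabla\phi)| \leq C\epsilon^2$ and, using the already established $b \leq C$, $\Delta_g\phi \leq C$ on $\Gamma_\epsilon$. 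Since on $\Gamma_\epsilon$ one has $D^2 u \geq 0$, and since the Euclidean and Riemannian Laplacians are comparable in a fixed small normal chart, AM--GM gives $\det D^2 u \leq (\Delta_{\mathrm{Eucl}} u/m)^m \leq C$; Theorem~\ref{teor_Szekelyhidi} therefore yields $|\Gamma_\epsilon| \geq c\epsilon^m$. Evaluating the defining inequality of $\Gamma_\epsilon$ at $y = 0$ shows $\phi < -L + \epsilon/2$ on $\Gamma_\epsilon$, and combining with $\sup\phi \leq C$ and $\int_M\phi\,{\rm Vol}_g = 0$ gives
\[
0 \;\leq\; (-L + \epsilon/2)\,c\epsilon^m + C\cdot{\rm Vol}(M),
\]
which bounds $L$ after fixing $\epsilon$.

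The main obstacle is the third step: the nonlinear term $Q(\nabla\phi,\nabla\phi)$ precludes any direct a priori upper bound on $\Delta\phi$, so the weak Harnack inequality cannot be applied to $\phi - \inf\phi$ the way it was applied to $\sup\phi - \phi$. The ABP-type estimate of Theorem~\ref{teor_Szekelyhidi} is tailored precisely for this: restricting to the contact set $\Gamma_\epsilon$ automatically forces $|\nabla\phi|$ to be small, which in turn controls the nonlinearity and the determinant $\det D^2 u$, producing the required measure-theoretic lower bound on the set where $\phi$ is close to its minimum. The remaining bookkeeping --- Euclidean--Riemannian comparability, uniform chart size --- is handled by compactness of $M$.
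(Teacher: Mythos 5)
Your proposal is correct, and it uses the same two analytic ingredients as the paper (the ABP estimate of Theorem~\ref{teor_Szekelyhidi} and the weak Harnack inequality of Theorem~\ref{teor_Harnack}), but it organizes the argument around a cleaner decomposition. Two genuine differences are worth flagging. First, you bound $b$ by integrating the equation against ${\rm Vol}_g$ and using $Q\leq 0$, whereas the paper evaluates the equation at a maximum point of $\phi$ (where $\nabla\phi=0$ and $\Delta\phi\leq 0$), so in fact the paper's $b$-bound does not even need the sign of $Q$. Second, and more importantly, you estimate $\sup\phi$ and $-\inf\phi$ separately, each time exploiting $\int_M\phi=0$: weak Harnack plus chaining to a maximum point of $\phi$ gives $\|{\sup\phi-\phi}\|_{L^p(M)}\leq C$ and hence $\sup\phi\leq C$, after which ABP at the minimum point together with the already-known upper bound on $\phi$ closes the argument. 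The paper instead applies both ABP and weak Harnack on a single chart centered at the minimum point and tries to bound the oscillation $\max\phi-\min\phi$ in one stroke; as written this requires $\inf_{B_{1/2}(0)}(\max_M\phi-\phi)$ to be uniformly bounded, which is exactly the quantity $\max_M\phi-\sup_{B_{1/2}(0)}\phi$ and is not obviously controlled when the chart is centered away from the maximum of $\phi$. Your split avoids that issue entirely, and this is the main advantage of your version. One small thing to spell out if you write this up in full: the chaining step for the weak Harnack needs a sentence explaining how the $L^p$ bound propagates from one coordinate ball to an overlapping one (via $\inf_{B'}v\leq |B'\cap B|^{-1}\int_{B'\cap B}v\leq C\|v\|_{L^p(B)}$), and, as in the paper, one should note that Theorem~\ref{teor_Harnack} is being invoked in the general uniformly-elliptic divergence-form version of GT so that the Riemannian Laplacian in local coordinates is admissible.
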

\begin{proof}
First of all we bound the constant $ b $. At a maximum point $ p $ of $ \phi $ we have
%$ \nabla\phi =0 $ and $ \Delta \phi \leq 0 $, therefore
$ b\, \mathrm{e}^{F(p)}-1\leq 0 $ and thus $ b\leq \mathrm{e}^{-F(p)}\leq \|\mathrm{e}^{-F}\|_{C^0} $ so that the constant $ b $ is bounded.

Let $ x_0\in M $ be a point where $ \phi $ achieves its minimum and consider a coordinate chart centered at $ x_0 $. Without loss of generality we may identify this chart with a ball $ B_1(0)\subseteq \R^m $ of unit radius, where $ m=\dim(M) $. Fix $ \epsilon>0 $ and define
\[
u(x)=\phi(x)-\max_M\phi+\epsilon \abs{x}^2\,.
\]
Applying Theorem \ref{teor_Szekelyhidi} to $ u $ we have
\begin{equation}
\label{eq_epsilon}
\epsilon^m\leq C_m\int_{\Gamma_\epsilon}\det(D^2u)\,.
\end{equation}
 We aim to prove that $ D^2u $ is bounded on $ \Gamma_\epsilon $. Differentiating $ u $ we see that
\[
\nabla u= \nabla \phi+2\epsilon x\,,\qquad D^2u=D^2\phi+2\epsilon I_m\,,
\]
where $ I_m $ is the identity matrix. As a consequence $ u $ satisfies the following equation
\[
\Delta u-2m\epsilon +Q(\nabla u-2\epsilon x,\nabla u-2\epsilon x)+1=b\,{\rm e}^F\,.
\]
Set, for instance, $ \epsilon=1 $. Now, since on $ \Gamma_{1} $ the Hessian $ D^2u $ is non-negative, % and thus $ u_{ii}\geq 0 $
by the arithmetic-geometric mean inequality we deduce
\[
\begin{aligned}
\det(D^2u(x))&\leq \left( \frac{\Delta u(x)}{m} \right)^m\leq \left \lvert Q(\nabla u(x)-2x,\nabla u(x-2x)+2m-1+b\, \mathrm{e}^{F(x)} \right \rvert^m\\
&\leq \left(\norma{Q}_{C^0} \abs{\nabla u(x)-2x}^2+2m+1+b \left \lvert {\rm e}^{F(x)} \right \rvert \right)^m \\
&\leq \left(\frac52 \norma{Q}_{C^0}+2m+1+b \left \lvert {\rm e}^{F(x)} \right \rvert \right)^m\leq C\,,
\end{aligned}
\]
for any $ x\in \Gamma_1 $, where $ C>0 $ is a uniform constant.

Now we  observe that
\[
u(x)\leq u(0)-\nabla u(x)\cdot (-x)\leq u(0)+\frac{1}{2}\,,\qquad \text{for every } x\in \Gamma_{1}\,,
\]
which implies 
\[
\phi(x)-\max_M \phi+\abs{x}^2\leq \phi(0)-\max_M \phi+\frac12=\min_M \phi- \max_M \phi +\frac12\,, \qquad \text{for every } x\in \Gamma_{1}\,,
\]
and therefore
\[
\max_M \phi-\min_M\phi \leq \max_M \phi - \phi(x)+ \frac12\,, \qquad \text{for every } x\in \Gamma_{1}\,.
\]
It follows that for every $ p\geq 1 $ we have
\begin{equation*}
\left(\max_M \phi-\min_M \phi\right) \left \lvert \Gamma_{1} \right \rvert ^{1/p}\leq \left \lVert \max_M \phi-\phi+\frac12 \right \rVert_{L^p(\Gamma_{1})} \leq \left \lVert \max_M \phi-\phi+\frac12 \right \rVert_{L^p(B_1(0))}\,.
\end{equation*}
Combining this with \eqref{eq_epsilon} and the fact that $ \int_M\phi=0 $, we have 
\begin{equation*}
\norma{\phi}_{C^0}\leq \max_M \phi - \min_M \phi\leq \abs{\Gamma_1}^{-1/p}\left \lVert \max_M \phi-\phi+\frac12 \right \rVert_{L^p(B_1(0))}\leq C \left \lVert \max_M \phi-\phi\right \rVert_{L^p(B_1(0))}\,,
\end{equation*}
for every $ p\geq 1 $. In conclusion we only need to prove an $ L^p $ estimate for $ \max_M\phi- \phi $ to obtain the desired estimate on $ \phi $. Since $ Q $ is negative semidefinite we see from the equation that
\[
\Delta \phi \geq b\, {\rm e}^F-1\geq C\,{\rm e}^{F}-1\,,
\]
where we used that $ b $ is uniformly bounded. This entails that $ \Delta(\max_M\phi -\phi) \leq 1-C\,{\rm e}^{F} $, and applying Theorem \ref{teor_Harnack} to $ \max_M \phi -\phi $ with, $ 1\leq p \leq m/(m-2) $, $ q=2m $, $ r=1/2 $ and $ R=2 $ we infer
\begin{equation*}
\left \lVert \max_M \phi -\phi \right \rVert_{L^{p}(B_1(0))}\leq C\left( \inf_{B_{1/2}(0)}\left(\max_M \phi -\phi\right)+\frac12\norma{1-C\,{\rm e}^{F}}_{L^m(B_2(0))} \right)\leq C\,,
\end{equation*}
as required.
\end{proof}

For higher order bounds we need to recall the following two results:

\begin{teor}[Theorem 3.1, Chapter 4 \cite{LU}]\label{stimaC1}
Let $ \Omega \subseteq \R^n $ be a bounded connected open subset. Consider a semilinear elliptic equation of the following type
\[
\Delta u+a(x,u,\nabla u)=0\,,
\]
where the function $ a(x,u,p) $ is measurable for $ x\in \bar \Omega $ and arbitrary $ u\in \R $, $ p\in \R^n $ and satisfies
\[
(1+|p|)\sum_{i=1}^n|p_i|+|a(x,u,p)|\leq \mu(|u|) (1+|p|)^m\,,
\]
for some $ m>1 $ and some non-decreasing continuous function $ \mu \colon [0,+\infty) \to \R $. Let $ u\in C^2(\Omega) $ be a solution of the given equation, then, for any connected open subset $ \Omega'\subset \Omega $ there exists a constant $ C>0 $ depending only on $ \| u \|_{C^0(\Omega)} $, $ \mu(\| u\|_{C^0(\Omega)} ) $, $ m $ and $ d(\Omega',\partial \Omega) $ such that
\[
\| u \|_{C^1(\Omega')} \leq C\,.
\]
\end{teor}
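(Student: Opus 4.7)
The plan is to prove this interior $C^1$ bound by the classical Bernstein technique. A standard covering argument reduces the statement to the following local claim: for any pair of concentric balls $B_r \Subset B_R \Subset \Omega$ and any $u \in C^2(B_R)$ solving the equation, one has
\[
\sup_{B_r} |\nabla u| \leq C(R-r,\, M_0,\, \mu(M_0),\, m),
\]
where $M_0 = \|u\|_{C^0(B_R)}$.

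On such a ball I would introduce the auxiliary function
\[
v(x) = \zeta(x)^2\, e^{\alpha(M_0+1-u(x))}\bigl(1+|\nabla u(x)|^2\bigr),
\]
with $\zeta \in C^\infty_c(B_R)$ a cutoff equal to $1$ on $B_r$ and $\alpha > 0$ a parameter to be fixed. Since $v$ is nonnegative and vanishes on $\partial B_R$, it attains its maximum at some interior point $x_0$, where $\nabla v(x_0) = 0$ and $\Delta v(x_0) \leq 0$. Expanding $\Delta v$ via $\Delta |\nabla u|^2 = 2|D^2 u|^2 + 2\langle \nabla u, \nabla \Delta u\rangle$ and substituting $\Delta u = -a(x,u,\nabla u)$, one extracts the good positive contribution $2\zeta^2 e^{\alpha(M_0+1-u)}|D^2u|^2$ together with an $\alpha^2$--positive term of the form $\alpha^2 \zeta^2 e^{\alpha(M_0+1-u)}|\nabla u|^2(1+|\nabla u|^2)$ produced by the exponential, and a list of bad terms coming from $\nabla\zeta$, $\Delta\zeta$ and from differentiating $a$. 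The structure hypothesis $(1+|p|)\sum |p_i|+|a| \leq \mu(|u|)(1+|p|)^m$ bounds each bad term by a polynomial in $(1+|\nabla u|)$ with a constant controlled by $\mu(M_0)$. After absorbing the cross terms with Cauchy--Schwarz into $2|D^2 u|^2$ and choosing $\alpha$ sufficiently large in terms of $\mu(M_0)$, $m$ and $R-r$, the inequality $\Delta v(x_0) \leq 0$ forces $(1+|\nabla u(x_0)|^2) \leq C$; using $\zeta \equiv 1$ on $B_r$ then yields the claimed local estimate.

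The main technical obstacle is that $a$ is only measurable in $x$, so the pointwise differentiation
\[
\nabla \Delta u \;=\; -\nabla_x a(\cdot,u,\nabla u) - a_u\,\nabla u - \sum_{j} a_{p_j}\,\nabla u_j
\]
needed in the Bernstein computation is not literally justified. I would handle this in the Ladyzhenskaya--Ural'tseva spirit in one of two ways: either mollify $a$ in $x$ to obtain a smooth family $a^{(\varepsilon)}$ satisfying the same structure condition with the same $\mu$, run the above Bernstein argument on the corresponding smooth solutions, and pass to the limit (the bound is stable because it depends only on $M_0$, $\mu(M_0)$, $m$ and $R-r$); or work directly with the weak form of the differentiated equation, testing against $\zeta^{2}(1+|\nabla u|^2)^{\gamma}\,u_k$ and performing a Moser iteration on $(1+|\nabla u|^2)^{\gamma/2}$. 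In both approaches the structure hypothesis on $m$, together with the exponential factor in $v$, is exactly what allows one to close the estimate and obtain the $L^\infty$ bound on $|\nabla u|$.
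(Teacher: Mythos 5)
The paper does not prove Theorem~\ref{stimaC1}: it is quoted as Theorem~3.1, Chapter~4, of \cite{LU} and invoked as a black box in Lemma~\ref{lapl}, so there is no in-paper argument against which to compare yours; the only benchmark is the source.

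On its own terms your Bernstein outline has two genuine gaps. The claimed positive $\alpha^2$-contribution from the exponential weight does not actually materialize. Writing $v=\zeta^2 h g$ with $h=e^{\alpha(M_0+1-u)}$ and $g=1+|\nabla u|^2$, one finds $\Delta(hg)=h\left[\alpha^2|\nabla u|^2\,g-\alpha(\Delta u)\,g+2|D^2u|^2+2\nabla u\cdot\nabla\Delta u-4\alpha\,u_iu_ju_{ij}\right]$. If you now use the critical-point identity $\nabla v(x_0)=0$, which gives $\nabla g=\alpha g\,\nabla u-g\,\nabla\zeta^2/\zeta^2$, the cross term $-4\alpha u_iu_ju_{ij}$ becomes $-2\alpha^2|\nabla u|^2 g$ plus lower order; if instead you estimate that cross term by Cauchy--Schwarz against $2|D^2u|^2$, it costs $-2\alpha^2|\nabla u|^4$. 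Either way the would-be quartic gain is cancelled, so $\Delta v(x_0)\le 0$ does not by itself force a bound on $|\nabla u(x_0)|$. The actual \cite{LU} argument closes via a more carefully chosen auxiliary quantity together with a structure condition on the $p$-derivatives of $a$ (note that the displayed hypothesis with $\sum_i|p_i|$ is vacuous for $1<m<2$ and is presumably a transcription of a bound on $(1+|p|)\sum_i|a_{p_i}|+|a|$), and pointwise absorption only works in the natural-growth regime $m\le 2$, which is all Lemma~\ref{lapl} needs since there $m=2$. Second, your first remedy for the merely measurable $x$-dependence of $a$ is circular: mollifying the coefficient to $a^{(\varepsilon)}$ does not produce a family of smooth solutions converging to the given fixed $u$, so there is nothing to pass to the limit through; your second remedy---weak differentiation of the equation followed by Moser iteration on $(1+|\nabla u|^2)^{\gamma/2}$---is the one that can be made rigorous and is in fact closer to what \cite{LU} actually do.
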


\begin{teor}[Theorem 6.1, Chapter 4 \cite{LU}]\label{stimaC1alpha}
Let $ \Omega \subseteq \R^n $ be a bounded connected open subset. Consider a semilinear elliptic equation of the following type
\[
\Delta u+a(x,u,\nabla u)=0\,,
\]
where the function $ a(x,u,p) $ is measurable for $ x\in \bar \Omega $ and arbitrary $ u\in \R $, $ p\in \R^n $ and satisfies
\[
\| a \|_{C^0(\Omega)}<\mu_1\,,
\]
for some constant $ \mu_1<\infty $. Let $ u\in C^2(\Omega) $ be a solution of the given equation such that
\[
\| \nabla u \|_{C^0(\Omega)}< C\,,
\]
then there exists $ \alpha\in (0,1) $ depending only on $ \| \nabla u \|_{C^0(\Omega)} $ and $ \mu_1 $ such that $ \nabla u\in C^{0,\alpha}(\Omega,\R^n) $. Moreover, for any connected open subset $ \Omega'\subset \Omega $ there exists a constant $ C>0 $ depending only on  $ \| \nabla u \|_{C^0(\Omega)} $, $ \mu_1 $ and $ d(\Omega',\partial \Omega) $ such that
\[
\| u \|_{C^{1,\alpha}(\Omega')} \leq C\,.
\]
\end{teor}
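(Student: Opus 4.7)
The plan is to freeze the nonlinearity using the hypothesis $\|\nabla u\|_{C^0(\Omega)}<C$, reducing the semilinear equation to a linear Poisson equation with bounded right-hand side, and then to invoke classical interior elliptic regularity.

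More precisely, I would set $f(x):=-a(x,u(x),\nabla u(x))$; the assumption $\|a\|_{C^0(\Omega)}<\mu_1$ immediately gives $f\in L^\infty(\Omega)$ with $\|f\|_{L^\infty}\leq \mu_1$, and $u$ satisfies the linear equation $\Delta u=f$ in $\Omega$. By the Calder\'on-Zygmund interior $L^p$ theory for the Laplacian, for any $\Omega'\subset\subset\Omega'' \subset\subset \Omega$ and any $1<p<\infty$,
\[
\|u\|_{W^{2,p}(\Omega')}\leq C(p,d(\Omega',\partial\Omega''))\bigl(\|u\|_{L^p(\Omega'')}+\|f\|_{L^p(\Omega'')}\bigr),
\]
and the right-hand side is controlled by $\|\nabla u\|_{C^0}$ (which via integration on $\Omega''$ controls $\|u\|_{L^p(\Omega'')}$ up to the single constant that can be subtracted off without affecting the derivatives), together with $\mu_1$ and the domain geometry. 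Choosing any $p>n$ and setting $\alpha:=1-n/p\in(0,1)$, the Morrey embedding $W^{2,p}(\Omega')\hookrightarrow C^{1,\alpha}(\Omega')$ then produces $u\in C^{1,\alpha}(\Omega')$ with the required quantitative bound, and the constants obtained depend only on $\|\nabla u\|_{C^0}$, $\mu_1$, $n$, and $d(\Omega',\partial\Omega)$, as claimed.

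The main obstacle is not computational but conceptual: the route just sketched in fact produces \emph{any} $\alpha\in(0,1)$, with a constant depending only on $p$, the dimension and the data, so it is stronger than the stated conclusion. The reason Ladyzhenskaya-Uraltseva formulate the estimate with some $\alpha$ depending on $\|\nabla u\|_{C^0}$ and $\mu_1$ is that their framework naturally covers genuinely quasilinear principal parts, where one cannot bypass a De Giorgi-Nash-Moser step; one differentiates the equation to obtain a uniformly elliptic linear PDE for $\partial_iu$ with bounded measurable coefficients, and the resulting H\"older exponent is genuinely determined by the ellipticity bounds (hence by $\|\nabla u\|_{C^0}$ and $\mu_1$). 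For our semilinear situation with Laplacian principal part the ellipticity is constant and the linear $L^p$ plus Sobolev approach above already suffices.
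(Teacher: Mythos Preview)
The paper does not prove this statement at all; it is quoted as Theorem~6.1 of Chapter~4 in Ladyzhenskaya--Ural'tseva and used as a black box in the subsequent lemma. So there is no in-paper argument to compare your proposal against.

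Your argument is correct. Once the gradient bound is in hand, freezing the nonlinearity turns the equation into $\Delta u=f$ with $f\in L^\infty$, and interior Calder\'on--Zygmund $W^{2,p}$ estimates followed by the Morrey embedding $W^{2,p}\hookrightarrow C^{1,1-n/p}$ yield the $C^{1,\alpha}$ bound for \emph{every} $\alpha\in(0,1)$. Your diagnosis of why the cited result is phrased with an $\alpha$ depending on the data is also right: in the book the principal part is allowed to be genuinely quasilinear, so one must differentiate and pass through De Giorgi--Nash--Moser, whose H\"older exponent is governed by the ellipticity constants; with the bare Laplacian that step is unnecessary, and your route is the cleaner one for the situation actually used in the paper.

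One minor caveat worth making explicit: the full norm $\|u\|_{C^{1,\alpha}(\Omega')}$ contains $\|u\|_{C^0(\Omega')}$, which obviously cannot be controlled by $\|\nabla u\|_{C^0}$ and $\mu_1$ alone. The statement as written is therefore slightly loose and should be read as a bound on $[\nabla u]_{C^{0,\alpha}(\Omega')}$ (equivalently on $\|u-c\|_{C^{1,\alpha}(\Omega')}$ for a suitable constant $c$); your remark about subtracting off a constant is exactly how to handle this, and it is all that the downstream application requires.
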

We can then establish the higher order a priori estimates for solutions to \eqref{eqngen}.
\begin{lem}\label{lapl}
Let $(M,g)$ be a compact Riemannian manifold, 
%$L\in \Gamma(T^*M)$,
$Q\in \Gamma(T^*M\otimes T^*M)$ and $F\in C^{0}(M)$. If $ (\phi,b)\in C^2(M)\times \R_+ $ satisfies 
$$
\Delta\varphi%+L(\nabla \varphi)
+Q(\nabla \varphi,\nabla \varphi)+1=b\,{\rm e}^F\,,
$$
then there exists a positive constant $C$ depending only on $M$, $g$, $ \|\phi\|_{C^0} $, $ b $,  
%$ \|L\|_{C^0} $,
$ \|Q\|_{C^0} $ and $F$ 
such that
\[
\|\Delta \phi\|_{C^0}\leq C\,.
\]
\end{lem}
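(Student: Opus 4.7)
The plan is to first establish a uniform $C^1$ bound for $\phi$ and then read off $\|\Delta \phi\|_{C^0}$ directly from the equation. Indeed, rewriting
$$
\Delta \phi = b\,{\rm e}^F - 1 - Q(\nabla \phi, \nabla \phi),
$$
any pointwise bound on $|\nabla \phi|$ immediately controls $\Delta \phi$, since $b$, $F$, and $\|Q\|_{C^0}$ are among the allowed constants. So the problem reduces to a gradient estimate.

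To obtain that gradient estimate, the natural strategy is to cover the compact manifold $M$ with a finite atlas of coordinate charts $(U_\alpha,x_\alpha)$ together with relatively compact subcharts $U_\alpha'\Subset U_\alpha$ which still cover $M$. On each chart the Riemannian Laplacian takes the form $\Delta_g=g^{ij}\partial_i\partial_j+\beta^k\partial_k$ with $g^{ij},\beta^k$ smooth and $g^{ij}$ uniformly elliptic, so the PDE rewrites as
$$
g^{ij}\partial_i\partial_j\phi + \tilde a(x,\phi,\nabla\phi)=0,\qquad \tilde a := \beta^k\partial_k\phi + Q(\nabla\phi,\nabla\phi)+1-b\,{\rm e}^F.
$$
The structural nonlinearity $\tilde a$ has at most quadratic growth in $|\nabla\phi|$, with a modulus depending only on $(M,g)$, $\|Q\|_{C^0}$, $b$ and $\|F\|_{C^0}$; together with the $C^0$ bound on $\phi$ already included among the hypotheses, this is precisely the structural setting of the Ladyzhenskaya--Ural'tseva interior gradient estimate recalled in Theorem \ref{stimaC1}. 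Applying it on each pair $(U_\alpha,U_\alpha')$ and taking the maximum over the finite cover yields a uniform bound $\|\nabla\phi\|_{C^0(M)}\le C$ with $C$ depending only on the listed quantities.

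The only point requiring care is that Theorem \ref{stimaC1} is recalled here for the flat Laplacian $\Delta$, while the local operator we actually face is $g^{ij}\partial_i\partial_j$. This, however, is built into the same reference \cite{LU}: the proof covers uniformly elliptic quasilinear operators with smooth leading coefficient, and the resulting estimate depends only on the ellipticity constants, the quadratic-growth modulus of $\tilde a$, and $\|\phi\|_{C^0}$, all of which are controlled on $M$ by compactness. With the $C^1$ bound in hand, substituting $\nabla\phi$ back into the rewritten PDE bounds $|\Delta\phi|$ pointwise in terms of $b$, $\|F\|_{C^0}$, $\|Q\|_{C^0}$ and $\|\nabla\phi\|_{C^0}$, which completes the argument.
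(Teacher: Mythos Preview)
Your proposal is correct and follows essentially the same route as the paper: apply the Ladyzhenskaya--Ural'tseva interior gradient estimate (Theorem~\ref{stimaC1}) to obtain $\|\nabla\phi\|_{C^0}\le C$, then read off the bound on $\Delta\phi$ directly from the equation. Your version is in fact more carefully written, since you make explicit the finite chart cover and flag the passage from the flat Laplacian in the statement of Theorem~\ref{stimaC1} to the operator $g^{ij}\partial_i\partial_j$ actually needed, whereas the paper simply invokes the theorem in one line.
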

\begin{proof}
As an application of Theorem \ref{stimaC1} with $ a=
Q+1-b\, \mathrm{e}^F $, $ m=2 $, and $ \mu \equiv 
\|Q\|_{C^0}+b\|\mathrm{e}^F\|_{C^0}+\sqrt{n}+1 $ we have that there exists a constant $ C>0 $ such that
\[
\|\nabla \phi\|_{C^0}\leq C\,.
\]
Then from the equation we have
\[
\|\Delta \phi\|_{C^0}\leq b\,{\rm e}^F+1%+\| L(\nabla \varphi)\|_{C^0}
+\|Q(\nabla \varphi,\nabla \varphi)\|_{C^0} \leq C\,,
\]
and the claim follows.
\end{proof}

\begin{lem}\label{C2alpha}
Let $(M,g)$ be a compact Riemannian manifold, %$L\in \Gamma(T^*M)$, 
$Q\in \Gamma(T^*M\otimes T^*M)$ and $ F\in C^{k,\beta}(M) $. If $ (\phi,b)\in C^{2}(M)\times \R_+ $ solves
$$
\Delta\varphi%+L(\nabla \varphi)
+Q(\nabla \varphi,\nabla \varphi)+1=b\,{\rm e}^F\,,
$$
then there exists $ \alpha\in (0,\beta) $ such that $\varphi$ in $C^{k+2,\alpha}(M)$ and there is a positive constant $ C>0 $ depending only on $M$, $ g $, $ b $, $ \|\nabla \phi\|_{C^0} $, %$ \|L\|_{C^0} $,
$ \|Q\|_{C^0} $ and $F$ 
such that
\[
\|\phi\|_{C^{k+2,\alpha}}\leq C\,.
\]
\end{lem}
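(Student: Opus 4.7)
The plan is a standard Schauder bootstrap carried out in local coordinate charts of the compact manifold $M$. I would fix a finite atlas and, in each chart, rewrite the equation as a semilinear elliptic PDE
\[
\Delta_g \phi + a(x,\nabla\phi)=0,\qquad a(x,p):=Q_x(p,p)+1-b\,\mathrm{e}^{F(x)},
\]
whose leading-order operator $\Delta_g$ is, in local coordinates, linear elliptic with smooth coefficients and ellipticity controlled by $g$.

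First, I would apply Theorem~\ref{stimaC1alpha} to obtain an initial $C^{1,\alpha}$ estimate. The bound $\|a\|_{C^0}\leq\|Q\|_{C^0}\,\|\nabla\phi\|_{C^0}^{2}+1+b\,\|\mathrm{e}^F\|_{C^0}$ is controlled by the data listed in the statement, so Theorem~\ref{stimaC1alpha} produces an exponent $\alpha\in(0,1)$ and a uniform constant with $\|\phi\|_{C^{1,\alpha}}\leq C$ on each chart; a finite covering argument then extends the estimate to all of $M$. Shrinking $\alpha$ if necessary I may assume $\alpha<\beta$, so that $F\in C^{k,\alpha}$ and a single H\"older exponent can be used throughout the bootstrap.

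Then I would bootstrap by applying interior Schauder estimates to
\[
\Delta_g\phi=b\,\mathrm{e}^F-1-Q(\nabla\phi,\nabla\phi).
\]
Since $\phi\in C^{1,\alpha}$, $F\in C^{0,\alpha}$ and $g,Q$ are smooth, the right-hand side lies in $C^{0,\alpha}$, so Schauder yields $\phi\in C^{2,\alpha}$ with a quantitative bound. Inductively, once $\phi\in C^{j+1,\alpha}$ and $F\in C^{j,\alpha}$ the right-hand side belongs to $C^{j,\alpha}$ and Schauder upgrades $\phi$ to $C^{j+2,\alpha}$; after at most $k$ iterations one reaches $\phi\in C^{k+2,\alpha}$ with the claimed bound. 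I do not expect a substantial obstacle: the only delicate point is the initial $C^{1,\alpha}$ step, which is exactly the content of Theorem~\ref{stimaC1alpha}, and the rest is a routine bootstrap. The only care required is to freeze $\alpha$ at the outset, and to track how the Schauder constants at each iteration depend on the quantities listed in the statement.
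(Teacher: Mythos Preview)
Your proposal is correct and follows essentially the same route as the paper: apply Theorem~\ref{stimaC1alpha} (using the gradient bound to control $\mu_1$) to obtain a $C^{1,\alpha}$ estimate with $\alpha<\beta$, then run the standard Schauder bootstrap on $\Delta_g\phi=b\,\mathrm{e}^F-1-Q(\nabla\phi,\nabla\phi)$ up to $C^{k+2,\alpha}$. The paper's proof is terser but identical in substance; your explicit tracking of the induction and the reason for fixing $\alpha<\beta$ at the outset is a nice clarification.
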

\begin{proof}
Lemma \ref{lapl} implies $ \|\nabla \phi \|_{C^0}\leq C $ and  we can apply Theorem \ref{stimaC1alpha} choosing the constant $ \mu_1=%\|L\|_{C^0}+
\|Q\|_{C^0}+b\|\mathrm{e}^F\|_{C^0}+1 $ and deduce that there exist $ \alpha \in (0,1) $ and a constant $ C>0 $ such that
\[
\| \phi \|_{C^{1,\alpha}}\leq C\,.
\]
Then the equation implies the estimate $ \|\Delta \phi\|_{C^{0,\alpha}} \leq C $, which can be improved to a $ C^{2,\alpha} $ estimate for $ \phi $ using Schauder theory by assuming $ \alpha<\beta $. Then $ \phi \in C^{2,\alpha}(M) $ and by a bootstrapping argument the claim follows.
\end{proof}

Now, we prove that equation \eqref{eqngen} is always solvable.

\begin{prop}\label{prop}
Let $(M,g)$ be a compact Riemannian manifold, $ Q\in \Gamma(T^*M\otimes T^*M) $ be negative semi-definite and $ F\in C^{k,\beta}(M) $. Then equation
\[
\Delta\varphi%+L(\nabla \varphi)
+Q(\nabla \varphi,\nabla \varphi)+1=b\,{\rm e}^F\,,\quad \int_M\varphi {\rm Vol}_g = 0
\]
admits a solution $ (\phi,b) \in C^{k+2,\alpha}(M)\times \R_+ $ for $ \alpha\in (0,\beta) $. 
\end{prop}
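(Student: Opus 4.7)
The plan is to establish existence by the continuity method applied to the one-parameter family
\[
\Delta\varphi_t + tQ(\nabla\varphi_t,\nabla\varphi_t)+1=b_t\,\mathrm{e}^{tF},\qquad \int_M \varphi_t\,\mathrm{Vol}_g=0,\qquad t\in[0,1],
\]
and to let $T\subseteq [0,1]$ be the set of those $t$ for which a solution $(\varphi_t,b_t)\in C^{k+2,\alpha}(M)\times \R_+$ exists. Showing that $T$ is nonempty, open, and closed will then force $T=[0,1]$ and in particular $1\in T$. Nonemptiness is immediate, since $(\varphi_0,b_0)=(0,1)$ solves the equation at $t=0$.

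For openness, I would fix $t_0\in T$ with a corresponding solution $(\varphi_0,b_0)$ and consider the smooth map
\[
\mathcal{F}_t(\varphi,b):=\Delta\varphi+tQ(\nabla\varphi,\nabla\varphi)+1-b\,\mathrm{e}^{tF}
\]
from $\{\varphi\in C^{k+2,\alpha}(M):\int_M\varphi\,\mathrm{Vol}_g=0\}\times\R$ to $C^{k,\alpha}(M)$. Its differential at $(\varphi_0,b_0)$, acting on a tangent vector $(\psi,c)$, is
\[
D\mathcal{F}_{t_0}(\psi,c)=L\psi-c\,\mathrm{e}^{t_0F}, \qquad L\psi:=\Delta\psi+2t_0Q(\nabla\varphi_0,\nabla\psi),
\]
where $L$ is a linear elliptic operator with no zero-order term. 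The strong maximum principle then gives $\ker L=\R\cdot 1$, so $L$ is Fredholm of index zero and $\ker L^*$ is one-dimensional; by the Krein--Rutman theorem applied to $L^*$, this kernel is spanned by a strictly positive function $\psi^*$. Given $h\in C^{k,\alpha}(M)$, the choice $c=-\int_M h\,\psi^*/\int_M \mathrm{e}^{t_0F}\psi^*$ is well defined (the denominator is strictly positive because $\psi^*>0$) and makes $h+c\,\mathrm{e}^{t_0F}$ orthogonal to $\ker L^*$, hence in the range of $L$; the normalization $\int_M\psi\,\mathrm{Vol}_g=0$ then pins down $\psi$. Thus $D\mathcal{F}_{t_0}$ is a bijection and, by the open mapping theorem, an isomorphism, so the implicit function theorem produces an open neighbourhood of $t_0$ in $T$.

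For closedness, let $t_n\in T$, $t_n\to t^*$, with solutions $(\varphi_n,b_n)$. Evaluating the equation at a minimum point of $\varphi_n$, where $\Delta\varphi_n\ge 0$ and $\nabla\varphi_n=0$, yields $b_n\ge \mathrm{e}^{-t_n F(x_{\min})}\ge\mathrm{e}^{-\|F\|_{C^0}}>0$, so the $b_n$ stay in a compact subset of $\R_+$. Combined with the a priori bounds of Lemmas \ref{C0}, \ref{lapl}, and \ref{C2alpha}, this yields uniform control of $\|\varphi_n\|_{C^{k+2,\alpha}}$; extracting a subsequence convergent in $C^{k+2,\alpha'}$ for some $\alpha'<\alpha$ produces a limit $(\varphi^*,b^*)$ solving the equation at $t^*$, and standard elliptic bootstrap then places $\varphi^*$ in $C^{k+2,\alpha}$ for a suitable $\alpha\in(0,\beta)$, so $t^*\in T$.

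The central technical point is the openness step, and within it the verification that the linearization $D\mathcal{F}_{t_0}$ is invertible: this reduces to showing that the cokernel of the first-order-perturbed Laplacian $L$ is generated by a \emph{positive} function, so that it pairs nontrivially with $\mathrm{e}^{t_0F}$. The rest is bookkeeping, once the a priori estimates are in hand and one records the positive lower bound on $b$ obtained from the minimum of $\varphi$.
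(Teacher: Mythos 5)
Your proposal follows essentially the same scheme as the paper: a continuity method along a deformation of the datum, with openness obtained by linearizing and applying the implicit (resp.\ inverse) function theorem, and closedness obtained from the a priori estimates of Lemmas~\ref{C0}, \ref{lapl}, \ref{C2alpha} together with Ascoli--Arzel\`a. Two small remarks are worth recording. First, you also deform $Q$ to $tQ$, whereas the paper keeps $Q$ fixed and only replaces $F$ by $tF$; since $(\varphi_0,b_0)=(0,1)$ solves the undeformed equation at $t=0$ anyway (the gradient term vanishes on $\varphi\equiv 0$), deforming $Q$ is harmless but unnecessary, and one should just note that $tQ$ remains negative semi-definite with $\|tQ\|_{C^0}\le\|Q\|_{C^0}$ so the estimates are uniform in $t$. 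Second, and more substantively, your openness step is more complete than the paper's: the paper states that orthogonality of the right-hand side to a generator of $\ker(T^*)$ ``can always be accomplished by a suitable choice of the constant $c$,'' but this requires knowing that the generator $\psi^*$ of $\ker(T^*)$ satisfies $\int_M \mathrm{e}^{\hat t F}\psi^*\neq 0$, which is not automatic from $\dim\ker(T^*)=1$ alone. Your Krein--Rutman argument, showing that $\psi^*$ may be chosen strictly positive because $T$ (a second-order elliptic operator without zeroth-order term) has principal eigenvalue $0$, is exactly the point needed to justify this, so you have in fact supplied a detail that the paper leaves implicit. Your closedness argument, including the explicit lower bound $b_n\ge \mathrm{e}^{-\|F\|_{C^0}}$ obtained at a minimum of $\varphi_n$, is also slightly more careful than the paper's phrasing but mathematically the same.
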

\begin{proof}
Let $F\in C^{k,\beta}(M) $ and consider the set  
$$
S:=\left\{t'\in [0,1]\,\,:\,\,\eqref{start} \mbox{ has a solution } (\phi_t,b_t)\in C^{4,\alpha}(M)\times \R_+ \mbox{ for }t\in[0,t']\right\}\,,
$$
where 
\begin{equation}
\tag{$*_t$}\label{start}
\Delta\varphi_t%+L(\nabla \varphi)
+Q(\nabla \varphi_t,\nabla \varphi_t)=b_t\,{\rm e}^{tF}-1\,,\quad \int_M\varphi_t {\rm Vol}_g = 0
\end{equation}
and $\alpha\in (0,\beta)$ is fixed. 

$S$ in not empty since the pair $ (\phi_0,b_0)=(0,1) $ solves \eqref{start} for $ t=0 $ and, therefore, $ 0\in S $. In  order to prove the statement we need to show that $S$ is open and closed in $[0,1]$. 

\medskip
To show that $S$ is open we apply, as usual, the inverse function theorem between Banach spaces. Let $\hat t\in S$ and $ (\phi_{\hat t},b_{\hat t}) $ be solution of $ (*_{\hat t}) $, let $B_1$ and $B_2$ be the Banach spaces 
\begin{eqnarray*}
 B_1:=\left\{\psi \in C^{4,\alpha}(M)\,\,:\,\,\int \psi {\rm Vol}_g =0\right\}\,,\quad   B_2:= C^{2,\alpha}(M)
\end{eqnarray*}
and let $\Psi \colon B_1 \times \R_+ \to B_2$ be the operator
$$ 
\Psi(\psi,a):=\log\left(\frac{\Delta \psi +Q(\nabla \psi,\nabla \psi)+1}{a}\right)\,.
$$
The differential $ \Psi_{*|(\phi_{\hat t},b_{\hat t})}\colon B_1\times \R \to B_2 $ is
$$
\Psi_{*|(\phi_{\hat t},b_{\hat t})}(\eta,c)=\frac{\Delta \eta+2Q(\nabla \eta, \nabla \phi_{\hat t})}{b_{\hat{t}}\, {\rm e}^{\hat{t}F}}-\frac{c}{b_{\hat t}}\,.
$$
Since $ T\colon \eta \mapsto \Delta \eta+2Q(\nabla \eta, \nabla \phi_{\hat t}) $ is a second order linear elliptic operator without terms of degree zero, by  the maximum principle its kernel is the set of constant functions on $M$. Moreover, since $T$ has the same principal symbol of the Laplacian operator it has index zero. Denoting with $ T^* $ the formal adjoint of $ T $ we then have
\[
\dim \ker(T^*)=\dim {\rm coker} (T)=\dim \ker(T)-{\rm ind}(T)=1\,.
\]
Let $ \rho \in C^{2,\alpha}(M) $. Equation
\begin{equation}\label{DQ}
\Delta \eta+2Q(\nabla \eta, \nabla \phi_{\hat t})=c\, {\rm e}^{\hat t F}+\rho\, b_{\hat t}\, {\rm e}^{\hat t F}
\end{equation}
is solvable if and only if its right hand side is orthogonal to $ \ker(T^*) $, or equivalently to a generator of $ \ker(T^*) $. This can always be accomplished by a suitable choice of the constant $c$ and, therefore, there always exists a solution $ \eta \in C^{4,\alpha}(M) $ to \eqref{DQ}. Moreover the solution $\eta$ is unique in $ B_1 $ because $ \int_M \eta{\rm Vol}_g=0 $. The differential $ \Psi_{*|(\phi_{\hat t},b_{\hat t})} $ is then an isomorphism and it follows by the inverse function theorem that the operator $ \Psi $ is locally invertible around $ (\phi_{\hat t},b_{\hat t}) $, implying that there exists $\epsilon>0$ such that for $t\in[\hat t,\hat t+\epsilon)$ equation \eqref{start} can be solved.

\medskip  
Next we prove that $S$ is closed. Let $ \{t_j\}$ be a sequence in $S$ converging to some $ t\in [0,1] $ and consider the corresponding solutions $ (\phi_j,b_j)=(\phi_{t_j},b_{t_j}) $ to $(*_{t_j})$. In view of Lemma \ref{C0} the families $ \{\norma{\phi_{j} }_{C^0}\} $, $\{b_{j}\}$ are uniformly bounded from above. Moreover, Lemmas \ref{lapl} and \ref{C2alpha} imply that the family $ \{\phi_j\}$ is uniformly bounded in $ C^{k+2,\alpha} $-norm. Consequently, by Ascoli-Arzel\`a Theorem, up to a subsequence, $ \phi_j$ converges to some $\phi_t\in C^{k+2,\alpha}(M)$ in $ C^{k+2,\alpha}$-norm and $ b_j$ converges to some $b_t\in\R$.  $b_t$ is in fact positive since from the equation we deduce that the sequence $b_j$ is uniformly bounded from below by a positive quantity. The pair $ (\phi_t,b_t) $ solves $(*_t)$ and the closedness of $S$ follows.  
\end{proof}

We are ready to prove Theorem \ref{main}.

\begin{proof}[Proof of Theorem $\ref{main}$] 
In view of Lemma \ref{lemma3} for every $\phi \in C^{\infty}_B(M)$ we have 
$$
\frac{(\Omega+\partial \partial_J\varphi)^n}{\Omega^n}=
\Delta\varphi
+Q(\nabla \varphi,\nabla \varphi)+1\,,
$$
where $\Delta$ is the Riemannian Laplacian of $g$ 
and $Q\in \Gamma(T^*M\otimes T^*M)$ is negative semi-definite. 

Let $F\in C^{\infty}_B(M)$. Proposition \ref{prop} 
implies that equation
\begin{equation*}
\Delta\varphi +Q(\nabla \varphi,\nabla \varphi)+1=b\,{\rm e}^F\,,\quad \int_M\varphi {\rm Vol}_g = 0
\end{equation*}
has a solution $ (\phi,b) \in C^\infty(M)\times\R_+$. We observe that since $F$ is basic, then $\phi$ is necessarily basic too. Indeed by setting  
\[
\Psi(\psi)=\Delta\psi +Q(\nabla \psi,\nabla \psi)+1
\]
we have that for every $ X\in \Gamma(\mathcal{F}) $ condition $X(F)=0$ implies 
\[
0=X(\Psi(\phi))=\Psi_{*|\phi} (X(\phi))
\]
and since $ \Psi_{*| \phi} $ is a linear elliptic operator without free term, by the maximum principle $ X(\phi) $ must be constant and then necessarily zero. Hence $(\varphi,b)$ solves the quaternionic Monge-Amp\`ere equation \eqref{qMAe} and the claim follows. 
%\medskip 
%\medskip 
%
%
%
%Let $ Q\in \Gamma(T^*M\otimes T^*M) $ be negative semi-definite, $ F\in C^\infty_B(M) $ and consider equation \eqref{eqngen}, that is
%\begin{equation*}
%\Delta\varphi +Q(\nabla \varphi,\nabla \varphi)+1=b\,{\rm e}^F\,,\quad \int_M\varphi {\rm Vol}_g = 0\,.
%\end{equation*}
%By Proposition \ref{prop} there exists a solution $ (\phi,b) \in C^\infty(M)\times\R_+ $. We need to show that $ \phi $ is necessarily a basic function, whenever the datum $ F $ is. Consider the operator
%\[
%\Psi(\psi)=\Delta\psi +Q(\nabla \psi,\nabla \psi)+1\,.
%\]
%Since $ F $ is basic also $ \Psi(\phi) $ must be basic, therefore, for every $ X\in \Gamma(\mathcal{F}) $
%\[
%0=X(\Psi(\phi))=\Psi_{*|\phi} (X(\phi))
%\]
%and since $ \Psi_{*| \phi} $ is a linear elliptic operator without free term, by the maximum principle $ X(\phi) $ must be constant and then necessarily zero. It follows that $ \phi $ lies in $ C^\infty_B(M) $ and for basic functions, because of Lemma \ref{lemma3}, the quaternionic Monge-Amp\`ere equation \eqref{qMAeb} is precisely of the form \eqref{eqngen} for some $ Q $. The existence of a solution to \eqref{qMAeb} follows. Uniqueness has been shown in general at the beginning of section \ref{section}. 
\end{proof}

\section{An explicit example: ${\rm SU}(3)$}
In this section we observe that Theorem \ref{main} can be applied for instance to study the quaternionic Monge-Amp\`ere equation on  ${\rm SU}(3)$ endowed with Joyce's hypercomplex structure, which we are about to describe. 

\medskip
The Lie algebra of $ {\rm SU}(3)$ 
$$
\mathfrak{su}(3)=\left\{\begin{pmatrix}
D & f\\
-\bar{f}^t & -\mathrm{tr}(D)
\end{pmatrix}\,\,:\,\,\mbox{$ D\in \mathfrak{u}(2) $ and $ f\in \C^2 $}\right\}
$$
splits in 
\[
\mathfrak{su}(3)=\mathfrak{b} \oplus \mathfrak{d}\oplus \mathfrak{f}
\]
where
\begin{itemize}
\item $ \mathfrak{d}\cong \mathfrak{sp}(1) $ is the space of matrices with zero $ f $ and $ \mathrm{tr}(D) $;

\vspace{0.1cm}
\item $ \mathfrak{f} $ consists of matrices with zero $ D $;

\vspace{0.1cm}
\item $ \mathfrak{b}\cong \R $ is the set of diagonal matrices commuting with $ \mathfrak{d} $.
\end{itemize} 

We have
$$
[\mathfrak{b},\mathfrak{d}]=0\,,\quad [\mathfrak{b},\mathfrak{f}]=\mathfrak{f}\,,\quad 
 [\mathfrak{d},\mathfrak{f}]= \mathfrak{f}\,,\quad [\mathfrak{f},\mathfrak{f}]=\mathfrak{b}\oplus \mathfrak{d}\,,\quad 
[\mathfrak{d},\mathfrak{d}]=\mathfrak{d}\,.
$$
In particular $\mathfrak{b}\oplus \mathfrak{d}$ is a subalgebra of $\mathfrak{su}(3)$ and induces a foliation $\mathcal F$ on ${\rm SU}(3)$. The HKT structure of ${\rm SU}(3)$ can be described in terms of the \lq\lq standard\rq\rq\, basis of $\mathfrak{su}(3)$
\[
\{X_1,\dots,X_8\}
\]
given by 
\begin{align*}
X_1=\begin{pmatrix}
i & 0 & 0\\
0 & i & 0\\
0 & 0 & -2i
\end{pmatrix}, && X_2=\begin{pmatrix}
i & 0 & 0\\
0 & -i & 0\\
0 & 0 & 0
\end{pmatrix}, && X_3=\begin{pmatrix}
0 & 1 & 0\\
-1 & 0 & 0\\
0 & 0 & 0
\end{pmatrix}, && X_4=\begin{pmatrix}
0 & i & 0\\
i & 0 & 0\\
0 & 0 & 0
\end{pmatrix}, \\
X_5=\begin{pmatrix}
0 & 0 & 1\\
0 & 0 & 0\\
-1 & 0 & 0
\end{pmatrix}, && X_6=\begin{pmatrix}
0 & 0 & i\\
0 & 0 & 0\\
i & 0 & 0
\end{pmatrix}, && X_7=\begin{pmatrix}
0 & 0 & 0\\
0 & 0 & 1\\
0 & -1 & 0
\end{pmatrix}, && X_8=\begin{pmatrix}
0 & 0 & 0\\
0 & 0 & -i\\
0 & -i & 0
\end{pmatrix}.
\end{align*}
Following Joyce's paper \cite{J},
the hypercomplex structure on ${\rm SU}(3)$ is then defined by the following relations:  
\begin{itemize}
\item on $ \mathfrak{b}\oplus \mathfrak{d}=\braket{X_1,X_2,X_3,X_4} $ as $ IX_1=X_2 $, $ IX_3=X_4 $, $ JX_1=X_3 $, $ JX_2=-X_4 $;

\vspace{0.1cm}
\item on $ \mathfrak{f}=\braket{X_5,X_6,X_7,X_8} $ as $ Iv=[X_2,v] $, $ Jv=[X_3,v] $, $ Kv=[X_4,v] $ for every $ v\in \mathfrak{f} $.
\end{itemize}
According to \cite{Dimitrov-Tsanov,Gori} all invariant hypercomplex structures on compact Lie groups are obtained from Joyce's construction. In view of these results, up to equivalence, this is the unique hypercomplex structure on ${\rm SU}(3)$. Moreover the standard metric $g$ that makes $\{X_1,\dots,X_8\}$ into an orthogonal frame is HKT with respect to $(I,J,K)$.  
Hence the foliation $\mathcal F$ induced by $\mathfrak{b}\oplus \mathfrak{d}$ is $(I,J,K)$-invariant and all the assumptions of Theorem \ref{main} are satisfied. Therefore the quaternionic Calabi-Yau equation on $({\rm SU(3)},I,J,K,g)$
can be solved for every  $\mathcal F$-basic datum $F$.

\medskip
As an example we show explicitly how the proof of Lemma \ref{lemma3} works in the case of ${\rm SU}(3)$. Let $\{X^1,\dots,X^8\}$ be the dual basis of $\{X_1,\dots X_8\}$ and let 
$$ 
Z^1=-\frac{1}{2}(X^1+iX^2)\,,\quad  Z^2=\frac{1}{2}(X^3+iX^4)\,,\quad Z^3=\frac{1}{2}(X^5+iX^6)\,,\quad Z^4=\frac{1}{2}(X^7+iX^8)
$$
be the induce unitary coframe with respect to $(g,I)$. 
Since the only non-zero brackets of vectors in $\{X_1,\dots,X_8\}$ are   
\[
\begin{split}
&[X_5,X_6]=X_1+X_2\,, \qquad [X_7,X_8]=X_2-X_1\,, \qquad [X_3,X_4]=2X_2\,,\\
&\frac{1}{2}[X_2,X_4]=[X_5,X_7]=-[X_6,X_8]=-X_3\,,\\
&\frac{1}{2}[X_2,X_3]=-[X_5,X_8]=-[X_6,X_7]=X_4\,,\\
&\frac{1}{3}[X_1,X_6]=[X_2,X_6]=-[X_3,X_7]=-[X_4,X_8]=-X_5\,,\\
&\frac{1}{3}[X_1,X_5]=[X_2,X_5]=-[X_3,X_8]=[X_4,X_7]=X_6\,,\\
&\frac{1}{3}[X_1,X_8]=-[X_2,X_8]=-[X_3,X_5]=-[X_4,X_6]=X_7\,,\\
&\frac{1}{3}[X_1,X_7]=-[X_2,X_7]=-[X_3,X_6]=[X_4,X_5]=-X_8\,,
\end{split}
\]
we have 
\begin{align*}
\partial Z^1=0\,, && \partial Z^2=2Z^{12}+2Z^{34}\,, && \partial Z^3=(1+3i)Z^{13}\,, && \partial Z^4=(1-3i)Z^{14}\,. 
\end{align*}
For a basic function $\varphi$ we have $ Z_1(\phi)=Z_2(\phi)=0 $, where $ \{Z_1,\dots,Z_4\} $ is the dual of $ \{ Z^1,\dots,Z^4 \} $, and thus we obtain
\[
\begin{split}
\partial \partial_J\varphi=&-\partial J \left( Z_{\bar 3}(\varphi)\, Z^{\bar{3}}+Z_{\bar 4}(\varphi)\, Z^{\bar{4}} \right)=\partial \left( Z_{\bar 3}(\varphi)\, Z^4-Z_{\bar 4}(\varphi)\, Z^3 \right)\\
=&\left( Z_3Z_{\bar 3}(\varphi) +Z_4Z_{\bar{4}}(\phi) \right) Z^{34}-\left(Z_1Z_{\bar{4}}(\phi)+ 
(1+3i)Z_{\bar{4}}(\phi)\right)Z^{13}\\
&+\left(Z_{1}Z_{\bar{3}}(\phi) +(1-3i)Z_{\bar 3}(\varphi)\right)Z^{14}+Z_2Z_{\bar{3}}(\phi)\,Z^{24}-Z_2Z_{\bar{4}}(\phi)Z^{23}
\end{split}
\]
which, using
\[
[Z_1,Z_{\bar{4}}]=(1-3i)Z_{\bar{4}}\,, \qquad [Z_1,Z_{\bar{3}}]=(1+3i)Z_{\bar{3}}\,, \qquad [Z_2,Z_{\bar{3}}]=-2Z_4\,, \qquad [Z_2,Z_{\bar{4}}]=2Z_3\,,
\]
simplifies to
\[
\partial \partial_J\varphi=\left(Z_3Z_{\bar 3}(\varphi)+Z_4Z_{\bar{4}}(\phi) \right) Z^{34}-2Z_{\bar{4}}(\phi)\,Z^{13}+2Z_{\bar 3}(\varphi)\,Z^{14} -2Z_4(\phi)\,Z^{24}-2Z_{3}(\phi)Z^{23}\,.
\]
Taking into account that the HKT form is
\[
\Omega=Z^{12}+Z^{34}
\]
we obtain 
$$
\frac{(\Omega+\partial\partial_J\varphi)^2}{\Omega^2}=
1+Z_3Z_{\bar{3}}(\varphi)+Z_4Z_{\bar{4}}(\varphi)-4\abs{Z_3(\phi)}^2-4\abs{Z_4(\phi)}^2
$$
in accordance with Lemma \ref{lemma3}.

%To prove existence we approach the problem with the \emph{method of continuity}, for which uniform estimates on the solutions are essential. To prove the required estimates we apply results from the book of O. Ladyzhenskaya and N. Ural'tseva \cite{LU}, which treats the more general class of second order quasilinear elliptic equations.


\begin{thebibliography}{[99]}
%\bibitem{Alesker (2003)}
%{\sc S. Alesker}, Non-commutative linear algebra and plurisubharmonic functions of quaternionic variables, {\em Bull. Sci. Math.}, {\bf 127}(1), 1--35, 2003.

\bibitem{Alesker (2013)}
{\sc S. Alesker}, Solvability of the quaternionic MongeAmp\`{e}re equation on compact manifolds with a flat hyperK\"{a}hler metric, {\em Adv. Math.}, {\bf 241}, 192--219, 2013.


\bibitem{Alesker-Shelukhin (2017)}
{\sc S. Alesker, E. Shelukhin}, A uniform estimate for general quaternionic Calabi problem (with appendix by Daniel Barlet), {\em Adv. Math.}, {\bf 316}, 1--52, 2017.

\bibitem{Alesker-Verbitsky (2006)}
{\sc S. Alesker, M. Verbitsky}, Plurisubharmonic functions on hypercomplex manifolds and HKT-geometry, {\em J. Glob. Anal.}, {\bf 16}, 375--399, 2006.

\bibitem{Alesker-Verbitsky (2010)}
{\sc S. Alekser, M. Verbitsky}, Quaternionic Monge-Amp\`{e}re equations and Calabi problem for HKT-manifolds, {\em Israel J. Math.}, {\bf 176}, 109--138, 2010.

\bibitem{Banos}
{\sc B. Banos, A. Swann}, Potentials for Hyper-K\"ahler Metrics with Torsion, {\em Classical and Quantum Gravity}, {\bf 21}(13), 3127--3135, 2004.

\bibitem{BGV}
{\sc L. Bedulli, G. Gentili, L. Vezzoni}, A parabolic approach to the Calabi-Yau problem in HKT geometry, {\tt arXiv:2105.04925}, to appear in {\em Math. Z.}.

\bibitem{Gori}
{\sc L. Bedulli, A. Gori, F. Podest\`a}, Homogeneous hyper-complex structures and the Joyce's construction. {\em Differential Geom. Appl.} {\bf 29} (2011), no. 4, 547--554.

\bibitem{Blocki}
{\sc Z. B\l{}ocki}, On uniform estimate in Calabi-Yau theorem, {\em Sci. China Ser. A}, {\bf 48}, 244--247, 2005.

\bibitem{Dimitrov-Tsanov}
{\sc G. Dimitrov, V. Tsanov}, Homogeneous hypercomplex structures I--the compact Lie groups, {\em Transform. Groups}, {\bf 21}, no. 3, 725--762, 2016.

\bibitem{DS}
{\sc S. Dinew, M. Sroka}, HKT from HK metrics, {\tt arXiv:2105.09344}. 

%\bibitem{dotti fino1}
%{\scxti and A. Fino}, Abelian hypercomplex 8-dimensional nilmanifolds, {\em Ann. Glob. Anal. and Geom.},
%{\bf 18} (2000), 47--59.
%
%\bibitem{dotti fino2}
%{\sc I. Dotti and A. Fino}, Hyperk\"ahler torsion structures invariant by nilpotent Lie groups, {\em Classical Quantum
%Gravity}, {\bf 19} (2002), 551--562.

\bibitem{GF}{\sc A. Fino, G. Grantcharov},
Properties of manifolds with skew-symmetric torsion and special holonomy, {\em Adv. Math.}, {\bf 189}, 439--450, 2004.

\bibitem{GentiliVezzoni}
{\sc G. Gentili, L. Vezzoni}, The quaternionic Calabi conjecture on abelian hypercomplex nilmanifolds viewed as tori fibrations, {\tt arXiv:2006.05773}, to appear in {\em Int. Math. Res. Not. IMRN}.

\bibitem{GT}
{\sc D. Gilbarg and N. S. Trudinger}, Elliptic partial differential equations of second order, second ed., Grundlehren der Mathematischen Wissenschaften [Fundamental Principles of Mathematical Sciences], vol. {\bf 224}, Springer-Verlag, Berlin, 1983.

\bibitem{GLV}{\sc G. Grantcharov, M. Lejmi, M. Verbitsky},
Existence of HKT metrics on hypercomplex manifolds of real dimension 8,
{\em Adv. Math.}, {\bf 320}, 1135--1157, 2017.

\bibitem{Grantcharov-Poon (2000)}
{\sc G. Grantcharov, Y. S. Poon}, Geometry of hyperK\"{a}hler connections with torsion, {\em Comm. Math. Phys.}, {\bf 213}(1), 19--37, 2000.


\bibitem{Howe-Papadopoulos (2000)}
{\sc P. S. Howe, G. Papadopoulos}, Twistor spaces for hyper-K\"{a}hler manifolds with torsion, {\em Phys. Lett. B}, {\bf 379}, 80--86, 1996.

\bibitem{J}
{\sc D. D. Joyce}, Compact hypercomplex and quaternionic manifolds, {\em J. Differential Geom.}, {\bf 35}, 743--761, 1992.

%\bibitem{Joyce}
%{\sc D. D. Joyce}, Riemannian Holonomy Groups and Calibrated Geometry, Oxford University Press, 2007.

\bibitem{Ivanov}
{\sc S. Ivanov, A Petkov}, HKT manifolds with holonomy ${\rm SL}(n,\H)$, {\em Int. Math. Res. Not. IMRN}, {\bf 16}, 3779--3799, 2012.  

\bibitem{LU}
{\sc O. Ladyzhenskaya, N. Ural'tseva}, Linear and Quasilinear Elliptic Equations, Academic Press, 1968.

%\bibitem{Obata (1956)}
%{\sc M. Obata}. Affine connections on manifolds with almost complex, quaternionic or Hermitian structures, {\em Japan. J. Math.}, {\bf 26}, 43--79, 1956.

\bibitem{Sroka}
{\sc M. Sroka}, The $ C^0 $ estimate for the quaternionic Calabi conjecture, {\em Adv. Math.}, {\bf 370}, 107237, 2020.

\bibitem{Swann}
{\sc A. Swann}, Twisting Hermitian and hypercomplex geometries, 
{\em Duke Math. J.}, {\bf 155}, no. 2, 403--431, 2010. 

\bibitem{Szekelyhidi}
{\sc G. Sz\'{e}kelyhidi}, Fully non-linear elliptic equations on compact Hermitian manifolds, {\em J. Differential Geom.} {\bf 109}, no. 2, 337--378, 2018.

%\bibitem{TW}
%{\sc V. Tosatti and B. Weinkove}, Estimates for the complex Monge-Amp\`ere equation on Hermitian and balanced manifolds, 
%{\em Asian J. Math.} {\bf 14} (2010), no.1, 19--40.

\bibitem{Verbitsky (2002)}
{\sc M. Verbitsky}, HyperK\"{a}hler manifolds with torsion, supersymmetry and Hodge theory, {\em Asian J. Math.}, {\bf 6}(4), 679--712, 2002.

\bibitem{Verbitsky (2007)}
{\sc M. Verbitsky},
Hypercomplex manifolds with trivial canonical bundle and their holonomy. (English summary) {\em Moscow Seminar on Mathematical Physics.} II, 203--211,
{\em  Amer. Math. Soc. Transl. Ser. 2}, {\bf 221}, Adv. Math. Sci., {\bf 60}, Amer. Math. Soc., Providence, RI, 2007.

\bibitem{Verbitsky (2009)}
{\sc M. Verbitsky}, Balanced HKT metrics and strong HKT metrics on hypercomplex manifolds {\em Math. Res. Lett.}, {\bf 16}, no. 4, 735--752, 2009.

\end{thebibliography}
\end{document}